\documentclass[12pt]{article}
\usepackage{graphicx} 
\usepackage{amsmath}
\usepackage{amsthm}
\usepackage{amssymb}
\usepackage{geometry}
\usepackage{mathrsfs}
\usepackage{lmodern}
\usepackage{upgreek}
\geometry{margin=1in}
\usepackage[
backend=bibtex8,
sorting=nyt,
]{biblatex}
\addbibresource{citations.bib}
\usepackage[colorlinks=true,
linkcolor=Peach,
citecolor=Peach]{hyperref}
\usepackage[dvipsnames, svgnames]{xcolor}
\hypersetup{
colorlinks=true,
linktoc=all,
linkcolor=RoyalBlue
}
\usepackage{csquotes}
\usepackage[T1]{fontenc} 
\usepackage{dsfont}
\linespread{1}

\newcommand{\N}{\mathbb N}
\newcommand{\R}{\mathbb R}
\newcommand{\E}{\mathbb E}
\newcommand{\p}{\mathbb P}
\newcommand{\1}{\mathds{1}}

\newcommand{\Z}{\mathbb Z}

\renewcommand{\d}{\mathrm d}
\renewcommand{\i}{\mathrm i}
\newcommand{\indep}{\perp \!\!\! \perp}
\newcommand{\distribution}{\underset{n\rightarrow\infty}{\implies}}
\newcommand{\Var}{\mathrm{Var}}

\newtheorem{theorem}{Theorem}
\newtheorem*{theorem*}{Theorem}

\newtheorem{lemma}{Lemma}
\newtheorem*{remark}{Remark}
\newtheorem{corollary}{Corollary}

\numberwithin{equation}{section}

\title{Functional Limit Theorems for the range of stable random walks}
\author{Maxence Baccara$^*$}

\begin{document}

\maketitle
\let\thefootnote\relax\footnotetext{$^{*}$ CMAP, CPHT, MERGE, École polytechnique, Institut Polytechnique de Paris, 91120 Palaiseau, France}

\begin{abstract}
    In this paper we establish Functional Limit Theorems for the range of random walks in $\Z^d$ that are in the domain of attraction of a non-degenerate $\beta$-stable process in the weakly transient and recurrent regimes. These results complement the  fluctuations obtained  at fixed time and the Functional Limit Theorems  obtained in the strongly transient regime.
    
    The techniques involve  original ideas of Le Gall and Rosen for fluctuations and   allow   to show tightness in some Hölder space, thus also providing sharp regularity results about  the limiting processes. 

    The original motivation of this work is the description of functionals appearing in spatial ecology for consumption of resources induced by random motion. We apply our result to estimate the large fluctuations of energy and mortality for a simple prey predator model.
\end{abstract}

\section{Introduction}

\subsection{Literature overview and motivations}

In what follows, we work on a probability space $(\Omega,\mathcal F,\p)$. Let $(X=(X_n)_{n\ge0},(\p_x)_{x\in\Z^d})$ be a $\Z^d$-valued random walk. By this, we understand that for all $x\in\Z^d$, under the probability measure $\p_x$ and for all $n\ge0$, $X_n$ may be written as :
$$X_n=x+Y_1+\cdots+Y_n,$$
where the $(Y_i)_{i\ge1}$ are $i.i.d.$ $\Z^d$-valued random variables with law $\mu$. We use the short-hand notation $\p:=\p_0$, and when referring to the random walk we shall simply write $X$. It is a Markov chain with transition distribution $p(x,y)=\mu(y-x)$. Its characteristic function $\phi$ is denoted 
$$\phi(x):=\E[\exp(\i\langle x,X_1\rangle],\quad x\in\mathbb T^d:=2\pi\R^d/\Z^d.$$

Given a discrete or continuous-time process $H$ and two times $a<b$ (in $\N$ or $\R_+$), we denote $H(a,b)$ the sample path of $H$ between times $a$ and $b$ included. We define $R_n$ the range of $X$ up to time $n$ as the cardinality of the set $X(0,n)$, which we will note $$R_n:=|X(0,n)|.$$

Asymptotic properties of the process $(R_n)_{n\ge0}$ under appropriate scaling have been extensively studied since the works of Dvoretsky and Erdös (\cite{DvoretzkyErdos1951}), who proved that $(R_n)_{n\ge0}$ satisfies a strong law of large numbers when $X$ is the simple random walk in dimension $d\ge2$. The scale factor for $d\ge3$ was found to be $n$, while for $d=2$ it was $n/\log n$. Kesten, Spitzer and Whitman in \cite{Spitzer1965} later showed showed that $n^{-1}R_n$ converges $a.s.$ to $p:=\p_0(\forall n\ge1,\text{ }X_n\neq0)$ for any random walk. The one-dimensional case is degenerate in the sense that the strong law doesn't hold. Instead, as a consequence of Donsker's invariance principle for $L^2$ random walks, it was shown by Jain and Pruitt (\cite{JainPruitt1972}) that the scaled range converges in distribution to a continuous functional of Brownian motion.

The question of a CLT for the range was first tackled by Jain and Orey in \cite{JainOrey1968}, who showed that $\Var(R_n)^{-1/2}(R_n-\E[R_n])$ converges in law to a standard Gaussian distribution in the case when $X$ is stongly transient and $p<1$. This result was extended to the transient case under some second moment assumptions in $d=2,3$ by Jain and Pruitt (\cite{JainPruitt1971}). It is interesting to note that in dimensions $d\ge4$, we have $\Var(R_n)\sim c_dn$ for some constant $c_d$, and for $d=3$, $\Var(R_n)\sim c_3 n\log n$. The case $d=2$ was sorted by Le Gall (\cite{LeGallIntersectionsMarchesAléatoires1986}), where this time the fluctuations were found to be non-Gaussian and are given by the self-intersection local time of planar Brownian motion when the walk considered is in $L^2$. In this case, we have $\Var(R_n)\sim c_2n^2/(\log n)^4$. For an accurate recount of such results and further results on strong invariance principles, large deviations and laws of iterated logarithms, one can consult \cite{BassChenRosen2006}.\\

Access to resources is crucial for survival and reproduction for any living organisms, from microbes to large animals or plants. It has in particular been shown that starvation can delay maturation, lower reproduction or even lead individuals to death (e.g. \cite{zhang2015effects},  \cite{gergs2014body},  \cite{monaco2014dynamic}). Several ecological theories have formalized this idea at different scales, from the growth and ontogeny of single individuals (e.g. \cite{west2001general},  \cite{kooijman2010dynamic},  \cite{filgueira2011comparison}, \cite{sousa2010dynamic}) to the dynamics of populations, communities and ecosystems (e.g. \cite{vanoverbeke2008modeling}, \cite{brown2004toward}). Even though some of these models consider individuals as the natural unit to consider, all of them are deterministic. One of the main idea behind these theories is that individuals acquire resources at a given rate, through predation for instance, while they consume it at another rate for their maintenance, growth, maturation and reproduction. However, the rate of resources acquisition is typically supposed constant, or linearly dependent on the condition of the individual, and does not emerge neither from the individual behaviour and its current state themselves, nor from its interaction with the resource or the prey. This has some important theoretical and applied limitation, for example for species conservation, as it is not clear how the deterministic models mentioned above effectively capture the mechanisms underlying resource acquisition and use by individuals.

Effective mortality and natality at a given time $n$ depend on the total resources consumed up to that time, while accounting for the temporal dissipation of their impact: the further in the past a resource was consumed, the less influence it is expected to have, beyond a certain latency time. We propose here a simple model in which the impact of resource consumption on survival is summarized by a subadditive functional of the trajectory of the predator. We introduce a decreasing function $m$ that quantifies the contribution at time $t$ of a prey consumed at time $s$ as $m(t-s)$. Assuming no resource regeneration and that initially one prey is available on each site, the prey consumed by a random walker corresponds to the times at which new sites are discovered—in other words, the increments of the walker’s range. A natural generalization of this model is one where preys are rather placed on a percolation cluster of $\Z^d$ (see \cite{bansaye:hal-04595815}). The cumulative effect of resource consumption at time $t$ is then given by :
\begin{equation}
    \label{eq:definition of energy}
    E_t:=\sum_{\tau_i\le t}m(t-\tau_i)=\int_0^tm(t-s)\,\d R_s,
\end{equation}
where $R_n$ denotes the range of the predator at time $n$ and $\tau_i:=\inf\{n\ge1\mid R_n=i\}$. The goal of this paper is to establish some CLT for the process $(E_t)_{t\ge0}$, which we refer to as the \textit{energy}, under some regularity assumptions on $m$ and on the motion of the predator. We now give some description of the setting in which we shall be working.

\subsection{Main results and strategy of proof}

We wish to establish functional results concerning stable random walks, motivated in particular by population dynamics, which notably include the $L^2$ case. For all $\beta\in(0,2]$, we let $U^\beta:=(U_t^\beta)_{t\ge0}$ be a non-degenerate strictly stable process of index $\beta$ in $\R^d$, that is a Lévy process such that for all $t>0$, $t^{1/\beta}U_1^\beta\overset{\mathrm{(d)}}=U_t^\beta$ (strict $\beta$-stability) and such that the law of $U_1^\beta$ isn't supported on a strict subspace of $\R^d$ (non-degeneracy). When $d=1$, we shall suppose that $U^\beta$ isn't a stable subordinator. We introduce the following assumptions :
\begin{itemize}
    \setlength\itemsep{0em}
    \item[(A1)] The additive subgroup $G$ generated by $\{x\in\Z^d\mid\p(Y_1=x)>0\}$ is $\Z^d$.
    \item[(A2)] $X$ is in the domain of attraction of a stable law $i.e$, that there is some $\beta\in(0,2]$ and a strictly increasing continuous function $b_\beta$ of regular variation of index $\beta^{-1}$ such that the following convergence in distribution holds :
    $$b_\beta(n)^{-1}X_n\distribution U_1^\beta.$$
    \item[(A3)] $\phi\in\mathcal C^1(\mathbb T^d\backslash\{0\})$ and for all $x\in\mathbb T^d\backslash\{0\}$,
    $$|\nabla\phi(x)|\le\frac{C}{|x|b_\beta^{-1}(|x|^{-1})}.$$
\end{itemize}
We shall constantly suppose that $X$ satisfies assumptions $(A1)$ and $(A2)$. The first assumption can be removed, but we keep it for sake of simplicity. It says that any site of $\Z^d$ can be reached by $X$ and that $X$ is aperiodic. If it is not satisfied, then we can find a group homeomorphism $\varphi$ and $p\le d$ such that $G=\varphi(\Z^p)$. If $X$ is aperiodic, we consider $\tilde X:=\varphi(X)$, and otherwise we can artificially make $X$ aperiodic by adding $0$ to the support of its transition distribution by considering for example the modified kernel 
$p'(x,y)=2^{-1}\mu(y-x)+2^{-1}\1_{\{x=y\}}$,
by letting $X'$ be the random walk with transition distribution $p'$ and considering $\tilde X:=\varphi(X')$. In both cases, $\tilde X$ is aperiodic. 

The third assumption is a useful tool to give bounds on the hitting times of $X$. Essentially, it is the analytical consequence used to bound the characteristic function of $X$ under the hypothesis that $X$ has a finite first moment (compare to \cite{LeGallIntersectionsMarchesAléatoires1986}, Lemma $3.1.$). As a matter of fact, it is automatically satisfied as soon as $X$ is integrable, which occurs as soon as $\beta>1$ (see \cite{LeGallRosenRangeOfStableRandomWalks1991}). 

The second assumption is essential, as it tells us that the rescaled random walk approaches a $\beta$-stable process and will allow us to identify the limits in distribution. In this setting, Le Gall and Rosen (\cite{LeGallRosenRangeOfStableRandomWalks1991}) proved that $(R_n)_{n\ge1}$ satisfies the strong law and CLT. The scales and limit distributions obtained depend on the value of the ratio $d/\beta$, as in the $L^2$ case (for which $\beta=2$). When $d/\beta\ge3/2$, the second-order fluctuations are Gaussian, for $1\le d/\beta<3/2$ the second-order fluctuations are non-Gaussian and given by a random variable that counts the time that $U^\beta$ spends self-intersecting up to time $t=1$ (see Section \ref{section:Intersection Local Times of Stable Processes in R^d} for more details and an explanation of this phenomenon) and for $d/\beta<1$, the CLT does not hold and a slightly different convergence in distribution takes place. Note that in the case of $L^2$ walks where the scale limit of $X$ is none other that the $d$-dimensional Brownian motion, $i.e.$ $U^2$, these cases correspond respectively to the dimensions $d\ge3$, $d=2$ and $d=1$. The fact that $b_\beta$ is strictly increasing and continuous is important but can be supposed without loss of generality (see \cite{feller1991introduction}, Chapter XVII.5).

Examining the form of the process $(E_t)_{t\ge0}$ defined in (\ref{eq:definition of energy}), we clearly see that the existing CLT for $(R_n)_{n\ge1}$ isn't sufficient to establish a CLT for $(E_t)_{t\ge0}$, since $E_t$ is a function of the trajectory $(R_n)_{n\le t}$. Naturally, this incites us to obtain a functional version of the aforementioned CLT, $i.e.$ convergence of the process $\left(\Var(R_n)^{-1/2}(R_{\lfloor nt\rfloor}-\E[R_{\lfloor nt\rfloor}]\right)_{t\ge0}$. Such an FCLT was obtained by Cygan, Sandrić and Šebek (\cite{CyganSandricSebekFCLTCapacityAndRange2019}), who showed that when $d/\beta>3/2$, the following convergence in distribution holds in the $J_1$ topology :
\begin{equation}
\label{distribution:FCLT for d/beta>3/2}
    \left(n^{-1/2}(R_{\lfloor nt\rfloor}-\E[R_{\lfloor nt\rfloor}])\right)_{t\ge0}\distribution(W_{Ct})_{t\ge0}.
\end{equation}
for some constant $C>0$, where $W$ denotes a standard Brownian Motion in $\R$. For applications to our model, this result isn't quite sufficient as it doesn't include the natural case of the simple symmetric planar random walk. Also, as we shall see, convergence in the $J_1$ topology is too weak for our purposes. Therefore, we wish to extend (\ref{distribution:FCLT for d/beta>3/2}) to the regime $d/\beta\le3/2$ and prove a slightly stronger form of convergence. 

We introduce the linearly interpolated version of the range 
\begin{equation*}
    \mathcal R_t:=R_{\lfloor t\rfloor}+(R_{\lfloor t\rfloor+1}-R_{\lfloor t\rfloor})(t-\lfloor t\rfloor),\quad t\ge0,
\end{equation*}
as well as the functions defined for $n\ge1$ as 
\begin{equation*}
    h(n):=\sum_{k=1}^n\p(X_k=X_0),\quad g(n):=\sum_{k=1}^nk^2b_\beta(k)^{-2d}.
\end{equation*}
$h(n)$ is the Green function of $X$ evaluated at $(0,0)$ and truncated at $n$, and $g$ is a scale function that appears naturally when calculating estimates for the variance of the range (see \cite{LeGallRosenRangeOfStableRandomWalks1991}). For example, in the case in the case where  the limiting stable process is an isotropic Brownian motion ($\beta=2$, $b_\beta(x)=\sqrt x$) with covariance $K(s,t):=\sigma^2(t\wedge s)$, $\sigma>0$, then it is well known on one hand that if $d=2$, we have
\begin{equation*}
    h(n)\underset{n\rightarrow\infty}\sim \frac{\log n}{2\pi\sigma}
\end{equation*}
and if $d=3$, then
\begin{equation*}
    g(n)=\sum_{k=1}^n k^{-1}\underset{n\rightarrow\infty}\sim\ln n.
\end{equation*}
Our first main result is the following FCLT for the continuous process $(\mathcal R_t)_{t\ge0}$, the proof of which is found in Section \ref{section:FCLT}.
\begin{theorem}
\label{theorem:main result}
    Under the assumptions $\mathrm{(A1)}$ and $\mathrm{(A2)}$, we have the following convergences in distribution on $\mathcal C(\R_+)$ endowed with the topology of uniform convergence on compacts
    \begin{itemize}
    \setlength\itemsep{0em}
        \item If $d/\beta\ge3/2$, there is a constant $\sigma^2>0$ such that
        \begin{equation}
            \label{convergence:FCLT for d/beta=3/2}
            \left((ng(n))^{-1/2}\left(\mathcal R_{ nt}-\E[\mathcal R_{ nt}]\right)\right)_{t\ge0}\distribution \left(W_{\sigma^2t}\right)_{t\ge0},
        \end{equation}
        where $W$ is a standard $1$-dimensional Brownian motion.
        \item If $1\le d/\beta<3/2$ then under assumption $(A3)$, we have 
        \begin{equation}
        \label{convergence:FCLT for 1<=d/beta<3/2}
            \left(\frac{h(n)^2b_\beta(n)^d}{n^2}\left(\mathcal R_{ nt}-\E[\mathcal R_{ nt}]\right)\right)_{t\ge0}\distribution\left(-\gamma_t^\beta\right)_{t\ge0},
        \end{equation}
        where $(\gamma_t^\beta)_{t\ge0}$ is the renormalized self-intersection local time of $U^\beta$ (see Section \ref{section:Intersection Local Times of Stable Processes in R^d} for a definition).
        \item If $d/\beta<1$,
        \begin{equation}
            \label{convergence:FLT for d/beta<1}
            \left(b_\beta(n)^{-1}\mathcal R_{ nt}\right)_{t\ge0}\distribution\left(\lambda\left(U^\beta(0,t)\right)\right)_{t\ge0},
        \end{equation}
    where $\lambda$ denotes the Lebesgue measure on $\R$ (note that $d/\beta<1\implies d=1)$.
    \end{itemize}
\end{theorem}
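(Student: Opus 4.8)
The plan is to reduce everything to the piecewise-constant range $R_{\lfloor nt\rfloor}$: since $|\mathcal R_{nt}-R_{\lfloor nt\rfloor}|\le 1$ while all three normalizing factors tend to infinity, the interpolation error is uniformly negligible on compacts after rescaling. It then suffices, in each regime, to prove (i) convergence of the finite-dimensional distributions of the rescaled process and (ii) tightness in $\mathcal C([0,T])$ for every $T>0$; together these give convergence in distribution in $\mathcal C(\R_+)$ for the topology of uniform convergence on compacts. The backbone for the finite-dimensional step is the first-visit decomposition
\begin{equation*}
R_n=\sum_{k=0}^n Z_k,\qquad Z_k:=\1\{X_k\notin\{X_0,\dots,X_{k-1}\}\},
\end{equation*}
so that for $s<t$ the increment $R_{\lfloor nt\rfloor}-R_{\lfloor ns\rfloor}$ counts the sites discovered on $(ns,nt]$, and the Markov property at time $\lfloor ns\rfloor$ decouples this increment from the past up to a boundary term (the discovered sites already visited before $ns$), whose contribution is controlled by the Green function $h$.

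For the finite-dimensional convergence I would follow the moment/cumulant method of Le Gall and Rosen (\cite{LeGallRosenRangeOfStableRandomWalks1991}, \cite{LeGallIntersectionsMarchesAléatoires1986}), expressing the joint moments of the increments as sums over the Green function and over discrete self-intersection counts. In the regime $d/\beta\ge 3/2$, the covariance $(ng(n))^{-1}\mathrm{Cov}(R_{\lfloor ns\rfloor},R_{\lfloor nt\rfloor})$ converges to $\sigma^2(s\wedge t)$, the cross-covariances of increments over disjoint intervals vanish after rescaling, and all scaled joint cumulants of order $\ge 3$ tend to $0$; by the cumulant criterion the increments are asymptotically jointly Gaussian and independent, which identifies the limit as the Brownian motion in \eqref{convergence:FCLT for d/beta=3/2}. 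In the regime $1\le d/\beta<3/2$ the Gaussian part is subdominant and the fluctuations are governed by the self-intersection term; here I would show that the rescaled centered range is, to leading order, the discrete renormalized self-intersection local time, and that the latter converges jointly in $t$ to $-\gamma^\beta$, using the invariance principle $(A2)$ together with $(A3)$ to pass from the lattice Green function to the potential density of $U^\beta$ and to justify the renormalization (subtraction of the diverging expectation). In the regime $d/\beta<1$ (so $d=1$, $\beta>1$) there is no centering: $b_\beta(n)^{-1}R_{\lfloor nt\rfloor}$ is, up to a vanishing error, $b_\beta(n)^{-1}$ times the number of integers in $X(0,\lfloor nt\rfloor)$, which converges to the Lebesgue measure of $U^\beta(0,t)$ by the invariance principle and the almost sure continuity of the occupation functional $w\mapsto\lambda(w([0,t]))$ at the stable trajectory.

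For tightness I would prove, on each compact $[0,T]$, a Kolmogorov–Chentsov moment bound of the form $\E[\,|M_n(t)-M_n(s)|^{p}\,]\le C_T\,|t-s|^{1+\delta}$ for the rescaled (and, in the first two regimes, centered) process $M_n$, uniformly in $n$ and in $s,t\in[0,T]$, with $p$ and $\delta>0$ chosen according to the regime. Such a bound yields tightness not merely in $\mathcal C([0,T])$ but in a Hölder space $\mathcal C^\alpha([0,T])$, which is the source of the announced regularity of the limiting processes. Via the decomposition above, the required estimate reduces to bounding high moments of range increments uniformly over sub-intervals, i.e. to delicate summations of products of Green-function terms and multiple-point visit probabilities.

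The main obstacle I anticipate is twofold, and both parts lie in the same technical computations. First, the high-moment increment bounds needed for tightness require a uniform combinatorial control of multi-point visit probabilities in terms of $h$, $g$ and $b_\beta$, with constants independent of the sub-interval; this is the genuinely hard analytic input. Second, in the regime $1\le d/\beta<3/2$, identifying the limit as the renormalized self-intersection local time $-\gamma^\beta$ is delicate because of the divergent renormalization: one must check that the centering used for $R_n$ matches, after rescaling, the divergent normalization defining $\gamma^\beta$, and that the convergence holds jointly over times $t$ rather than at a single time — precisely the functional strengthening that the one-time CLT of \cite{LeGallRosenRangeOfStableRandomWalks1991} does not provide.
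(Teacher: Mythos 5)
Your overall skeleton --- reduce to $R_{\lfloor nt\rfloor}$, prove finite-dimensional convergence plus a uniform Kolmogorov moment bound, and extract H\"older-space tightness --- is exactly the paper's, but the two places where you defer the work are where essentially all of the content lies, and two of the mechanisms you do commit to are flawed. The clearest failure is in the regime $d/\beta<1$: you deduce the finite-dimensional convergence from the invariance principle together with ``a.s.\ continuity of the occupation functional $w\mapsto\lambda(w([0,t]))$ at the stable trajectory''. No such continuity holds: in the $J_1$ (or even the uniform) topology, every c\`adl\`ag path is a limit of step functions, whose images are finite and hence Lebesgue-null, so this functional is discontinuous at every path whose range has positive measure; worse, the rescaled walk is itself such a step path, and the quantity you actually study, $b_\beta(n)^{-1}$ times the number of visited sites, is not this functional evaluated along the approximating paths. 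This is precisely why the paper (following Theorem 7.1 of \cite{LeGallRosenRangeOfStableRandomWalks1991}) argues differently: the Skorokhod representation only yields the \emph{one-sided} a.s.\ bound $b_\beta(n)^{-1}\tilde R_{\lfloor nr\rfloor}\le(1+o(1))\,\lambda(\mathcal W_r^{2\varepsilon})$ via a sausage around the limiting path, and the matching lower bound is obtained indirectly, by combining convergence of expectations ($(7.a)$ of \cite{LeGallRosenRangeOfStableRandomWalks1991}) with uniform integrability to upgrade the one-sided bound to $L^1$ convergence. Without this step, or a substitute for it, your argument in that regime does not go through.

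In the regime $d/\beta\ge3/2$ you propose to verify asymptotic joint Gaussianity by showing that all rescaled joint cumulants of order at least $3$ vanish. This is not the method of \cite{LeGallRosenRangeOfStableRandomWalks1991} (which relies on block decompositions), no such cumulant estimates for the range are available in the literature you invoke, and you give no route to them; moreover they are unnecessary, since the decomposition $R_{\lfloor nt\rfloor}=R_{\lfloor ns\rfloor}+\tilde R_{\lfloor nt\rfloor-\lfloor ns\rfloor}-I_{\lfloor ns\rfloor,\lfloor nt\rfloor-\lfloor ns\rfloor}$, with $\tilde R$ independent of the past, reduces joint Gaussianity to the one-time CLT plus Slutsky once the intersection term is shown to vanish in probability. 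That vanishing --- together with the whole tightness programme in every regime, and the identification of the joint limit as $-\gamma^\beta$ when $1\le d/\beta<3/2$ --- rests on exactly the two inputs you explicitly leave open: the uniform estimate $S_{d,\beta}(n)\E\bigl[I_{\lfloor ns\rfloor,\lfloor nt\rfloor}\bigr]\le C_\eta(s\wedge t)^{\chi_{d,\beta}-\eta}$ of Lemma \ref{lemma:Hölder bound for moments of I} (proved in the paper by Fourier analysis of the discrete intersection local time $J_{n,m}$, Potter bounds, and, where needed, assumption (A3)), and, in the non-Gaussian regime, the dyadic block decomposition of the range matching the dyadic construction of $\gamma^\beta$ in Section \ref{section:Intersection Local Times of Stable Processes in R^d}, in which the block-range fluctuations are negligible in $L^2$ uniformly in $n$ as the dyadic depth grows and an exchange-of-limits lemma concludes. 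Since you flag both as anticipated obstacles rather than resolving them, what you have is a correct high-level plan whose essential steps remain unproven, together with one regime where the proposed shortcut is genuinely incorrect.
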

Note that if $d/\beta>3/2$, $g$ is bounded and so we recover (\ref{distribution:FCLT for d/beta>3/2}). We recall the definition of $(E_t)_{t\ge0}$ and introduce the analogous process $(\mathcal E_t)_{t\ge0}$ :
\begin{equation*}
    E_t=\int_0^tm(t-s)\,\d R_s,\quad\mathcal E_t:=\int_0^tm(t-s)\,\d\mathcal R_s.
\end{equation*}
As an application of Theorem \ref{theorem:main result}, we get our second main result, the proof of which may be found in Section \ref{section:LT energy}. It shall be extended to the functional setting in Section \ref{section:FLT energy}.
\begin{theorem}
\label{theorem:main result 2}
    Suppose that $m$ is continuously differentiable, of regular variation of index $\chi$ and has a monotone derivative. Then under assumptions $(A1)$ and $(A2)$, for all $t\ge0$, the following convergences in distribution hold :
    \begin{itemize}
        \item If $d/\beta\ge3/2$ and $\chi>-1/2$,
        \begin{equation}
            \frac{1}{m(n)\sqrt{ng(n)}}\left(E_{nt}-\E[E_{nt}]\right)\distribution\sigma\int_0^t(t-s)^\chi\,\d W_{s}
        \end{equation}
        where $\sigma^2$ is as in Theorem \ref{theorem:main result}, 
        \item If $1\le d/\beta<3/2$ and $\chi>\beta/d-2$, under the additional hypothesis $(A3)$,
        \begin{equation}
            \frac{h(n)^2b_\beta(n)^d}{m(n)n^2}\left(E_{nt}-\E[E_{nt}]\right)\distribution -\int_0^t(t-s)^\chi\,\d\gamma_s^\beta,
        \end{equation}
        \item If $d/\beta<1$ and $\chi>-1/\beta$,
        \begin{equation}
            \frac{1}{m(n)b_\beta(n)}E_{nt}\distribution\int_0^t(t-s)^\chi\,\d L(s),
        \end{equation}
        where $L(s):=\lambda\left(U^\beta(0,s)\right)$ for all $s\ge0$.
    \end{itemize}
    We interpret all of limiting integrals in the sense of Young (see Appendix \ref{appendix:Young integral}).
\end{theorem}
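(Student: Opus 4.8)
The plan is to realize $E_{nt}$, after the regime-appropriate rescaling, as a continuous Young functional applied to the rescaled range, and then to invoke Theorem \ref{theorem:main result} together with the continuous mapping theorem. Since Theorem \ref{theorem:main result} is phrased in terms of the interpolated range $\mathcal{R}$, I would first replace $E$ by $\mathcal{E}_{nt}=\int_0^{nt}m(nt-s)\,\d\mathcal{R}_s$. Writing the sums explicitly, $E_{nt}-\mathcal{E}_{nt}=\sum_k\Delta R_k\big(m(nt-k)-\int_{k-1}^k m(nt-s)\,\d s\big)$, so the difference is governed by the oscillation of $m$ over unit intervals, i.e.\ by $m'$. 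Because $m'$ is monotone, the monotone density theorem gives $m'\in\mathrm{RV}_{\chi-1}$ with $xm'(x)/m(x)\to\chi$, and a regime-by-regime estimate shows $E_{nt}-\mathcal{E}_{nt}$ is negligible against the energy scale. This reduction is routine and I would only sketch it.

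The core computation is the change of variables $s=nu$, which yields
\begin{equation*}
\mathcal{E}_{nt}-\E[\mathcal{E}_{nt}]=\int_0^t m\big(n(t-u)\big)\,\d\big(\mathcal{R}_{nu}-\E[\mathcal{R}_{nu}]\big)=m(n)a_n\int_0^t\frac{m(n(t-u))}{m(n)}\,\d\tilde{R}^n_u,
\end{equation*}
where $a_n$ is the range normalization of the regime and $\tilde{R}^n_u:=a_n^{-1}(\mathcal{R}_{nu}-\E[\mathcal{R}_{nu}])$ (without centering when $d/\beta<1$). In every case the energy scale is exactly $m(n)a_n$, so after dividing by it the prefactor cancels and the normalized energy equals $\int_0^t \frac{m(n(t-u))}{m(n)}\,\d\tilde{R}^n_u$. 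By the uniform convergence theorem for regularly varying functions, $m(n(t-u))/m(n)\to(t-u)^\chi$ uniformly on $[0,t-\delta]$ for each $\delta>0$, while Theorem \ref{theorem:main result} identifies the limit of $\tilde{R}^n$ as $W_{\sigma^2\cdot}$, $-\gamma^\beta$, or $L$ respectively. The candidate limit is therefore the Young integral $\int_0^t(t-u)^\chi\,\d x_u$ evaluated at the corresponding limit path.

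To promote this to convergence in law I would use continuity of the map $x\mapsto\int_0^t(t-u)^\chi\,\d x_u$. Integrating by parts,
\begin{equation*}
\int_0^t(t-u)^\chi\,\d x_u=-t^\chi(x_0-x_t)+\chi\int_0^t(x_u-x_t)(t-u)^{\chi-1}\,\d u,
\end{equation*}
and if $x\in\mathcal{C}^H([0,t])$ then $|x_u-x_t|\lesssim(t-u)^H$, so the right-hand side is finite and the functional is continuous on $\mathcal{C}^H$ precisely when $\chi+H>0$. Here $H$ is the sharp Hölder exponent of the limit process: $H=1/2$ for Brownian motion and $H=1/\beta$ for the occupation process $L$, matching the thresholds $\chi>-1/2$ and $\chi>-1/\beta$, and the intermediate condition $\chi>\beta/d-2$ is the corresponding statement $\chi+H>0$ for the renormalized self-intersection local time $\gamma^\beta$, whose regularity is obtained in the analysis of Section \ref{section:FCLT}. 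Applying the continuous mapping theorem requires convergence of $\tilde{R}^n$ in $\mathcal{C}^{H'}$ for some $H'<H$ with $\chi+H'>0$, which is stronger than the uniform convergence literally stated; I would supply it by combining the identification of the limit with the Hölder tightness established while proving Theorem \ref{theorem:main result}. Since Theorem \ref{theorem:main result 2} is a fixed-time statement, no joint control in $t$ is needed, only path convergence on $[0,t]$. The scaling identity $W_{\sigma^2 u}\overset{(d)}=\sigma W_u$ then recasts the first limit as $\sigma\int_0^t(t-u)^\chi\,\d W_u$.

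The main obstacle is the endpoint $u=t$, where $m(n(t-u))/m(n)$ fails to converge uniformly and, for $\chi<0$, the limiting integrand is singular. I would handle it by splitting at $t-\delta$: on $[0,t-\delta]$ the integrand converges uniformly and $\tilde{R}^n\to x$ in $\mathcal{C}^{H'}$, so the truncated integrals converge, while the contribution of $[t-\delta,t]$ must be shown small uniformly in $n$ and in the limit. For the limit this is the estimate $\int_{t-\delta}^t(t-u)^{\chi+H-1}\,\d u=O(\delta^{\chi+H})$, and for the prelimit the same bound follows from Potter's inequality for $m$ in the bulk of $[t-\delta,t]$, together with the continuity of $m$ at the origin on the innermost scale $t-u=O(1/n)$, and the \emph{uniform} Hölder control on $\tilde{R}^n$ coming from the tightness of Section \ref{section:FCLT}. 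Letting $\delta\to0$ after $n\to\infty$ closes the argument, and the exponent $\chi+H>0$ that guarantees this uniform smallness is exactly the hypothesis on $\chi$, which is what makes the three stated ranges sharp.
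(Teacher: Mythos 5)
Your proposal is correct and is essentially the paper's own proof: the same reduction from $E$ to $\mathcal E$, the same change of variables producing the kernel $m(n(t-\cdot))/m(n)$, the same integration by parts transferring the singularity onto $\partial_s m_n$, and the same two ingredients for the limit passage, namely the uniform-in-$n$ Hölder bounds on $\overline{\mathcal R}^{(n)}$ established in Section \ref{section:FCLT} and the regular-variation toolkit (uniform convergence theorem, Potter bounds, monotone density theorem); the paper merely packages the limit passage via Skorokhod representation plus dominated convergence, where you use continuous mapping in a Hölder topology plus a $\delta$-splitting at the endpoint, and these are interchangeable given the same tightness estimates. One remark: your identification of the intermediate threshold as $\chi+H>0$ with $H<2-d/\beta$ actually reads $\chi>d/\beta-2$, not the stated $\chi>\beta/d-2$; since the paper's own proof in Section \ref{section:LT energy} also assumes $\chi>d/\beta-2$, this is an inconsistency (typo) in the theorem statement rather than a gap in your argument.
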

In our setting, we wish $m$ to be continuously decreasing and such that $m(t)$ goes to $0$ as $t\rightarrow\infty$. Such examples are given by $m(t):=L/(1+t)^\delta$ for some $L,\delta>0$. These choices satisfy our regularity hypotheses, so long as $\delta$ isn't too large. We now briefly discuss our strategy of proof. We let 
\begin{equation*}
    \overline R_t^{(n)}=S(n)(R_{\lfloor nt\rfloor}-\E[R_{\lfloor nt\rfloor}]),\quad\overline{\mathcal R}_t^{(n)}=S(n)(\mathcal R_{nt}-\E[\mathcal R_{nt}]),
\end{equation*}
where the function $S(n)$ is an equivalent of $\Var(R_n)^{-1/2}$ (which thus depends on the value of the ratio $d/\beta$). 
All we need to prove Theorem \ref{theorem:main result} is to show tightness and convergence of finite-dimensional marginals of $\overline{\mathcal R}^{(n)}$. For the convergence of finite dimensional marginals, we use the Cramér-Wold Theorem as in \cite{CyganSandricSebekFCLTCapacityAndRange2019}. Since $\overline{\mathcal R}^{(n)}$ is simply the continuous linearly interpolated version of $\overline R^{(n)}$, it is sufficient to show the convergence of the marginals of the latter. In order to exhibit tightness, we use Kolmogorov's criterion for $\overline{\mathcal R}^{(n)}$. The key to showing that this criterion applies is obtaining sharp bounds on the normalized moments of the range, which we carry out in each corresponding section, and also on the normalized moments of the joint range of two independent random walks, which corresponds to Lemma \ref{lemma:Hölder bound for moments of I}. A byproduct of this approach is uniform in $n$ local Hölder continuity of the $\overline{\mathcal R}^{(n)}$ and of the limiting process, which we may use to prove Theorem \ref{theorem:main result 2} for $\mathcal E$. Since $|\mathcal E_t-E_t|\le1$, we deduce the announced version of Theorem \ref{theorem:main result 2}. Note also that we could rewrite Theorem \ref{theorem:main result} by replacing $\mathcal R$ with $R$, and the convergence would hold in the $J_1$ topology instead of the topology of uniform convergence on compacts for continuous functions.

\section{Preliminary results and notations}
\label{section:Preliminary results and notations}

We say that a positive, measurable, real valued function $f$ is of \textit{regular variation of index $\kappa$} if for all $c>0$, we have 
$$\underset{x\rightarrow\infty}\lim\frac{f(cx)}{f(x)}=c^\kappa.$$
In particular, any function $f$ of regular variation of index $\kappa$ may be written as $f(x):=x^\kappa g(x)$ for some positive measurable function $g$ such that for all $c>0$, 
$$\underset{x\rightarrow\infty}\lim\frac{g(cx)}{g(x)}=1.$$
Therefore, $g$ is of regular variation of index $0$. Such functions shall be referred to as \textit{slowly varying functions}. As it turns out, a function $f$ is of regular variation of index $\kappa$ if and only if it may be written in the form $f(x)=x^\kappa g(x)$ for some slowly varying function $g$. As described in \cite{BinghamGoldieTeugelsRegularVariation1987}, for $f$ a function of regular variation of index $\kappa$ and $K$ a compact subset of $(0,\infty)$, we have 
\begin{equation}
        \label{lim:uniform convergence for reguarly varying functions}
        \underset{x\rightarrow\infty}{\lim}\underset{c\in K}\sup\left|\frac{f(cx)}{f(x)}-c^\kappa\right|=0.
\end{equation}
This property allows to deduce the following useful result :
\begin{lemma}
    \label{lemma:limit of f(x_n)/f(y_n) (regular variation)}
    Let $f$ be of regular variation of index $\kappa$ and $(x_n)_{n\ge1}$, $(y_n)_{n\ge1}$ be two positive real sequences such that $\underset{n\rightarrow\infty}\lim x_n=\underset{n\rightarrow\infty}\lim y_n=\infty$ and $x_n/y_n\underset{n\rightarrow\infty}\sim\ell$ for some $\ell>0$. Then
    \begin{equation}
    \label{conv:limit of f(x_n)/f(y_n) (regular variation)}
        \underset{n\rightarrow\infty}\lim\frac{f(x_n)}{f(y_n)}=\ell^\kappa.
    \end{equation}
\end{lemma}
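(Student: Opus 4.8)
The plan is to reduce the statement to the uniform convergence property (\ref{lim:uniform convergence for reguarly varying functions}) by writing the ratio as a single evaluation of $f$ at an argument rescaled by a factor tending to $\ell$. I would set $c_n := x_n/y_n$, so that $c_n \underset{n\rightarrow\infty}\to \ell$ by hypothesis and
$$\frac{f(x_n)}{f(y_n)} = \frac{f(c_n y_n)}{f(y_n)}.$$
The idea is then to compare $f(c_n y_n)/f(y_n)$ with $c_n^\kappa$ via (\ref{lim:uniform convergence for reguarly varying functions}), and to compare $c_n^\kappa$ with $\ell^\kappa$ via continuity of the power map.

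First I would fix the compact set on which to apply the uniform bound. Since $\ell > 0$ and $c_n \to \ell$, there is an integer $N$ such that $c_n \in K := [\ell/2,\, 2\ell]$ for all $n \ge N$, and $K$ is a compact subset of $(0,\infty)$. Then for $n \ge N$ the triangle inequality gives
$$\left|\frac{f(x_n)}{f(y_n)} - \ell^\kappa\right| \le \sup_{c\in K}\left|\frac{f(cy_n)}{f(y_n)} - c^\kappa\right| + \left|c_n^\kappa - \ell^\kappa\right|.$$

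Next I would let $n \to \infty$ in each term on the right-hand side. Because $y_n \to \infty$, applying (\ref{lim:uniform convergence for reguarly varying functions}) with $x = y_n$ on the compact $K$ shows that the supremum term tends to $0$. The second term tends to $0$ because $t \mapsto t^\kappa$ is continuous at $\ell \in (0,\infty)$ and $c_n \to \ell$. Combining these two limits yields $f(x_n)/f(y_n) \to \ell^\kappa$, which is the claim.

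I do not anticipate any genuine obstacle here: the single point requiring care is that the uniform estimate (\ref{lim:uniform convergence for reguarly varying functions}) is only available on a compact set bounded away from both $0$ and $\infty$, and this is exactly where the hypothesis $\ell > 0$ enters, since it guarantees that $c_n$ is eventually trapped in such a set $K$. Were $\ell$ allowed to be $0$, the rescaling factors $c_n$ could drift toward the boundary of $(0,\infty)$, where (\ref{lim:uniform convergence for reguarly varying functions}) no longer furnishes the required uniform control and the conclusion may fail.
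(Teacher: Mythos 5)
Your proof is correct and follows essentially the same route as the paper: both write $f(x_n)/f(y_n)$ as $f(c_n y_n)/f(y_n)$ with $c_n = x_n/y_n$ eventually trapped in a compact subset of $(0,\infty)$, apply the uniform convergence property (\ref{lim:uniform convergence for reguarly varying functions}) on that compact, and finish with continuity of $t\mapsto t^\kappa$ (which the paper leaves implicit in its final step). The only cosmetic difference is your choice of compact $[\ell/2,2\ell]$ versus the paper's $[\ell-\varepsilon,\ell+\varepsilon]$.
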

\begin{proof}
    Let $0<\varepsilon<\ell$. Then for $n$ sufficiently large, we have $x_n/y_n\in K_{\ell,\varepsilon}:=[\ell-\varepsilon,\ell+\varepsilon]$. Hence 
    $$\underset{n\rightarrow\infty}\lim\left|\frac{f(x_n)}{f(y_n)}-\left(\frac{x_n}{y_n}\right)^\kappa\right|=\underset{n\rightarrow\infty}\lim\left|\frac{f\left(\frac{x_n}{y_n}\cdot y_n\right)}{f(y_n)}-\left(\frac{x_n}{y_n}\right)^\kappa\right|\le\underset{n\rightarrow\infty}\lim\underset{c\in K_{\ell,\varepsilon}}\sup\left|\frac{f(cy_n)}{f(y_n)}-c^\kappa\right|=0$$
    and the conclusion is reached.
\end{proof}
We shall also make use of the so-called Potter bounds for regularly varying functions (see \cite{BinghamGoldieTeugelsRegularVariation1987}). If $f$ is a regularly varying function of index $\kappa$ that is also bounded away from $0$ and $\infty$ on all compact subsets of $[0,\infty)$, then for all $\varepsilon>0$, there is some constant $C_\varepsilon>0$ such that for all $x,y\in(0,\infty)$,
\begin{equation}
\label{ineq:Potter bounds for regularly varying functions}
    C_\varepsilon^{-1}\left(\frac{x}{y}\right)^{\kappa-\varepsilon}\wedge\left(\frac xy\right)^{\kappa+\varepsilon}\le\frac{f(x)}{f(y)}\le C_\varepsilon\left(\frac{x}{y}\right)^{\kappa-\varepsilon}\vee\left(\frac xy\right)^{\kappa+\varepsilon}
\end{equation}

We define $l_\beta$ as the continuous increasing inverse of $b_\beta$. Also, we let $s_\beta$ be the slowly varying function such that $b_\beta(x)=x^{1/\beta}s_\beta(x)$. We now mention the following fact that will be useful in the case $d/\beta\ge3/2$.
    \begin{equation}
    \label{lim:technical limit with s and g}
        \underset{n\rightarrow\infty}\lim s_\beta(n)^d\sqrt{g(n)}=\infty.
    \end{equation}
Indeed, by taking $p\ge1$ and $\eta\in(0,1)$, we have
\begin{equation}
    \underset{n\rightarrow\infty}\lim\underset{\lceil n/p\rceil\le k\le n}\max\left|\frac{s_\beta(n)}{s_\beta(k)}-1\right|\le\underset{n\rightarrow\infty}\lim\underset{c\in[1/p,1]}\sup\left|\frac{s_\beta(n)}{s_\beta(cn)}-1\right|=0
\end{equation}
whence for $n$ sufficiently large, $\underset{\lceil n/p\rceil\le k\le n}\min\text{ }s_\beta(n)/s_\beta(k)\ge1-\eta$, and in turn
\begin{equation}
    g(n)\ge\frac{1}{s_\beta(n)^{2d}}\sum_{k=\lceil n/p\rceil}^n\frac{s_\beta(n)^{2d}}{ks_\beta(k)^{2d}}\ge\frac{(1-\eta)^{2d}}{s_\beta(n)^{2d}}\sum_{k=\lceil n/p\rceil}^n\frac1k.
\end{equation}
Taking limits on both sides yields $\underset{n\rightarrow\infty}\liminf g(n)s_\beta(n)^{2d}\ge(1-\eta)^{2d}\log p$ and the conclusion is reached since $p$ is arbitrary. Applying a similar line of reasoning, we may also show in this case that $g$ is slowly varying.

We now turn to some properties on the number of common points in the range of two independent random walks. In what follows, we let $X'$ denote an $i.i.d.$ version of $X$, and for $n,m\ge1$ we introduce the quantity
\begin{equation*}
    I_{n,m}:=|X(0,n)\cap X'(0,m)|,\quad I_{n,0}=I_{0,m}=0.
\end{equation*} 
The choice of convention when one of the arguments is zero may seem arbitrary, but it doesn't change any of the results. Indeed, in practice we shall either be examining the quantity $I_{\lfloor ns\rfloor,\lfloor nt\rfloor}$ at fixed times $s$ and $t$ and letting $n$ go to infinity, in which case our convention isn't important, or we shall be using $I_{n,m-n}$ to represent $|X(0,n)\cap X(n+1,m)|$ as in the first line of the proof of Lemma \ref{lemma:Hölder bound moments of scaled range 1<=d/beta<3/2} for example. In this case, if $m-n=0$, then by convention $|X(0,n)\cap X(n+1,m)|=0$. Either way, we may thus add this slight modification to the definition. When $Z$ is an integrable, we employ the notation
\begin{equation*}
    \{Z\}:=Z-\E[Z].
\end{equation*}
We shall be using the following inequality \cite{LeGallRosenRangeOfStableRandomWalks1991} :
\begin{equation}
    \label{ineq:bound for centered 2p-th moment of intersections}
    \E[(I_{n,m})^p]\le (p!)^2\E[I_{n,m}]^p,\quad n,m,p\ge1.
\end{equation}
In the case $n=m$, it is Lemma $3.1$ of \cite{LeGallRosenRangeOfStableRandomWalks1991}, and we see that identical arguments given in the proof carry out to the case $n\ne m$. As a consequence, we obtain the following 
\begin{equation}
    \label{ineq:bound for centered moments of intersections}
    \E\left[\left\{I_{n,m}\right\}^{2p}\right]\le C_p\left(\E\left[(I_{n,m})^{2p}\right]+\E[I_{n,m}]^{2p}\right)\le (p!)^2C_p\E[I_{n,m}]^{2p}.
\end{equation}
For some constant $C_p>0$ depending only on $p$. As we shall see in section \ref{section:FCLT}, it is crucial to obtain some Hölder-type estimate for the scaled moments of $(s,t)\mapsto I_{\lfloor ns\rfloor,\lfloor nt\rfloor}$. To do so, we use the following Lemma.
\begin{lemma}
\label{lemma:Hölder bound for moments of I}
    Let $n\ge1$, $T>0$ and $s,t\in(0,T]$. Then for any $\eta>0$ sufficiently small, there is a constant $C_{\eta}>0$ such that 
    \begin{equation}
        \label{ineq:Hölder bounds for moments of I_{ns,nt}}
        S_{d,\beta}(n)\E\left[I_{\lfloor ns\rfloor,\lfloor nt\rfloor}\right]\le C_\eta (s\wedge t)^{\chi_{d,\beta}-\eta},
    \end{equation}
    where 
    \begin{equation*}
        \chi_{d,\beta}:=
        \begin{cases}
            1/\beta & \text{if } d/\beta<1\\
            2-d/\beta & \text{if } 1\le d/\beta<3/2\\
            1/2 & \text{if } 3/2\le d/\beta<2\\
        \end{cases},\quad
        S_{d,\beta}(n):=
        \begin{cases}
            b_\beta(n)^{-1} & \text{if } d/\beta<1\\
            h(n)^2b_\beta(n)^d/n^2 & \text{if } 1\le d/\beta<3/2\\
            (ng(n))^{-1/2} & \text{if } 3/2\le d/\beta<2
        \end{cases}
    \end{equation*}
\end{lemma}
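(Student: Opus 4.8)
The plan is to first reduce the statement to an upper bound on $\E[I_{\lfloor ns\rfloor,\lfloor nt\rfloor}]$ that depends, up to lower order corrections, only on the smaller of the two arguments, and then to turn that bound into the announced Hölder estimate by regular variation. Since $X$ and $X'$ are independent copies, $I_{n,m}$ and $I_{m,n}$ have the same law, so $\E[I_{\lfloor ns\rfloor,\lfloor nt\rfloor}]$ is symmetric in $(s,t)$ and I may assume $s\le t$, so that $s\wedge t=s$ and $a:=\lfloor ns\rfloor\le b:=\lfloor nt\rfloor$. If $a=0$ the left-hand side vanishes by convention, so I may also assume $a\ge 1$, equivalently $s\ge 1/n$; this coupling between $s$ and $n$ will turn out to be crucial.

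For the probabilistic input I would start from the hitting-time representation $\E[I_{a,b}]=\sum_{x\in\Z^d}\p(T_x\le a)\,\p(T'_x\le b)$, where $T_x$ (resp.\ $T'_x$) is the first hitting time of $x$ by $X$ (resp.\ $X'$). Writing $G_N(x)=\sum_{k=0}^N\p(X_k=x)$ for the truncated Green function and decomposing at the first visit to $x$ yields $G_{2N}(x)\ge G_N(0)\,\p(T_x\le N)$, hence the pointwise bound $\p(T_x\le N)\le G_{2N}(x)/(1+h(N))$ (note this is elementary and requires neither $\mathrm{(A3)}$ nor symmetry of the walk). Inserting it for both walks and using $\sum_x G_{2a}(x)G_{2b}(x)=\sum_{j=0}^{2a}\sum_{k=0}^{2b}\p(X_j=X'_k)$ together with the local limit bound $\p(X_j=X'_k)\le C\,b_\beta(j+k)^{-d}$ for the difference walk $X-X'$ (legitimate under $\mathrm{(A1)}$--$\mathrm{(A2)}$), I am left with a purely analytic double sum
\[
\E[I_{a,b}]\le\frac{C}{(1+h(a))(1+h(b))}\sum_{j=0}^{2a}\sum_{k=0}^{2b}b_\beta(j+k)^{-d}.
\]
Evaluating this sum by Karamata-type regular-variation calculus is the heart of the matter. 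For $1<d/\beta<2$ the inner tail $\sum_{l\ge j}b_\beta(l)^{-d}$ converges and is $\asymp j\,b_\beta(j)^{-d}$, so the double sum is $\asymp a^2 b_\beta(a)^{-d}$, \emph{independent of $b$}, and since $h$ is then bounded one obtains $\E[I_{a,b}]\le C\,a^2 b_\beta(a)^{-d}$. In the recurrent regime $d/\beta<1$ (so $d=1$) I would instead use the trivial domination $I_{a,b}\le R_a$ with $\E[R_a]\le C\,b_\beta(a)$.

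It then remains to multiply by $S_{d,\beta}(n)$ and to replace $b_\beta(\lfloor ns\rfloor)$ and $h(\lfloor ns\rfloor)$ by $b_\beta(n)$ and $h(n)$ up to powers of $s$. This is exactly what Lemma~\ref{lemma:limit of f(x_n)/f(y_n) (regular variation)} and the Potter bounds (\ref{ineq:Potter bounds for regularly varying functions}) deliver: the slowly varying ratios tend to $1$ uniformly on compact $s$-ranges and are controlled by $s^{\mp\eta}$ for small $s$, which is precisely the source of the exponent loss $-\eta$. In the strictly transient case this gives $S_{d,\beta}(n)\E[I_{a,b}]\le C_\eta\,s^{2-d/\beta-\eta}$, and in the recurrent case $C_\eta\,s^{1/\beta-\eta}$, matching $\chi_{d,\beta}$.

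The two places where genuine care is required, and which I expect to be the main obstacle, are the boundary cases. First, in the weakly transient regime $3/2\le d/\beta<2$ the true growth exponent of $\E[I_{a,b}]$ is $2-d/\beta\le 1/2$, which is \emph{smaller} than the claimed $\chi_{d,\beta}=1/2$; the point is that $S_{d,\beta}(n)=(ng(n))^{-1/2}$ is the variance scale rather than $\E[I_{n,n}]^{-1}$, so the prefactor $n^{3/2-d/\beta}g(n)^{-1/2}$ tends to $0$, and combining this decay with the constraint $s\ge 1/n$ inside the Potter bounds pins the admissible exponent at exactly $1/2$ (the gap $d/\beta-3/2$ is paid for by powers of $n$ via $s\ge 1/n$). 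Second, at the critical value $d/\beta=1$ the walk may be recurrent, the inner tail $\sum_{l\ge j}b_\beta(l)^{-d}$ diverges, and the double sum retains a genuine logarithmic dependence on $b$, of size $\log(b/a)\le\log(t/s)+C$; this must be tracked and then absorbed into the $s^{-\eta}$ slack through $s\log(1/s)\le C_\eta s^{-\eta}$, the $T$-dependence being swallowed into $C_\eta$. Away from these two thresholds the argument is routine regular-variation bookkeeping, made uniform in $n$ and $s$ by the Potter bounds, with $\eta$ providing exactly the room needed for the slowly varying corrections.
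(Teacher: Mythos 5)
Your proposal is correct in outline and, for the core estimate, takes a genuinely different route from the paper. The reduction step is the same: your Green-function inequality $\p(T_x\le N)\le G_{2N}(x)/(1+h(N))$, summed over $x$ for both walks, is precisely the paper's Markov-property step giving $h(a)h(b)\,\E[I_{a,b}]\le\E[J_{2a,2b}]$, and your endgame (Potter bounds for the slowly varying ratios, plus the observation $s\ge 1/n$ converting negative powers of $n$ into positive powers of $s$) is exactly how the paper handles its Cases 2--4. The genuine difference is the estimation of $\E[J_{2a,2b}]=\sum_{j\le 2a}\sum_{k\le 2b}\p(X_j=X'_k)$: the paper writes it as an integral of powers of the characteristic function over the rescaled torus, splits it by H\"older's inequality with regime-dependent exponents $q_1,q_2$, and invokes Rosen's bound $(5.15)$; you instead use the local limit bound $\p(X_j=X'_k)\le C\,b_\beta(j+k)^{-d}$ and Karamata summation. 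Your route is more elementary and makes transparent why the bound is independent of the larger time index in the transient case; it also gives a much shorter treatment of $d/\beta<1$ (the domination $I_{a,b}\le R_a$), where the paper redoes the Fourier argument and must separately prove $n/(h(n)b_\beta(n))=O(1)$. Two remarks: the bound on $\p(X_j=X'_k)$ is best derived from $\sup_x\p(X_j=x)\le Cb_\beta(j)^{-d}$ (available in Le Gall--Rosen under (A1)--(A2)) applied to the larger of the two indices, rather than from a ``difference walk'', since $X_j-X'_k$ is not a single random walk evaluated at one time; and your $1+h$ normalization is in fact the careful choice, since $h(a)$ may vanish for small $a$.

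There are two boundary cases where your sketch, as written, has a gap. At $d/\beta=3/2$ the mechanism you describe --- the prefactor $n^{3/2-d/\beta}g(n)^{-1/2}$ tends to $0$ and the gap $d/\beta-3/2$ is paid by $s\ge1/n$ --- degenerates: the gap is zero, the prefactor reduces to $g(n)^{-1/2}$, which need not tend to $0$ (if $s_\beta(k)=(\log k)^{1/d}$ then $g$ is bounded), and what must actually be absorbed is the possibly unbounded slowly varying factor $s_\beta(\lfloor ns\rfloor)^{-d}$ coming from $b_\beta(\lfloor ns\rfloor)^{-d}$. The missing ingredient is the paper's limit (\ref{lim:technical limit with s and g}), namely $s_\beta(n)^d\sqrt{g(n)}\rightarrow\infty$: combined with a Potter bound it yields $s_\beta(\lfloor ns\rfloor)^{-d}g(n)^{-1/2}\le C_\eta s^{-\eta}$, after which your computation closes. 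Second, at $d/\beta=1$ you correctly identify the logarithmic dependence on the larger index and the absorption of logarithms into $s^{-\eta}$, but this case is only described, not executed: the Karamata estimate with the slowly varying factors $s_\beta(j+k)^{-d}$, and the Potter bound on $(1+h(n))^2/\bigl((1+h(a))(1+h(b))\bigr)$ in the recurrent situation, still have to be written out. Both repairs are routine given your setup, so these are gaps in execution rather than in strategy.
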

\begin{proof}
    We start by introducing the discrete intersection local time 
    \begin{equation*}
        J_{n,m}=\sum_{i=0}^n\sum_{j=0}^m\1_{\{X_i=X_j'\}}=\sum_{y\in\Z^d}\sum_{i=0}^n\sum_{j=0}^m\1_{\{X_i=y\}}\1_{\{X_j'=y\}}.
    \end{equation*}
    Reasoning as in \cite{LeGallRosenRangeOfStableRandomWalks1991}, we first observe the following obvious inequality
    \begin{equation}
    \label{ineq:bound 1 for E[I_n] wrt to E[J_n]}
        \sum_{y\in\Z^d}\E\left[\1_{\{y\in X(0,n)\}}\1_{\{y\in X'(0,m)\}}\sum_{i=0}^{2n}\1_{\{X_i=y\}}\sum_{i=0}^{2m}\1_{\{X_j'=y\}}\right]\le \E[J_{2n,2m}].
    \end{equation}
    Then, by letting $T_y:=\inf\{n\ge0\mid X_n=y\}$ for $y\in\Z^d$, we use the Markov property at time $T_y$ to get the following bound
    \begin{align}
    \label{ineq:bound 2 for E[I_n] wrt to E[J_n]}
        \E\left[\1_{\{y\in X(0,n)\}}\sum_{i=1}^{2n}\1_{\{X_i=y\}}\right]
        &=\E\left[\sum_{\ell=1}^n\1_{\{T_y=\ell\}}\sum_{i=\ell}^{2n}\p_y\left(X_{i-\ell}=y\right)\right]\nonumber\\
        &\ge\p\left(y\in X(0,n)\right)\sum_{i=0}^n\p_y(X_i=y)=\p(y\in X(0,n))h(n),
    \end{align}
    where the last line is due to the fact that $2n-\ell\ge n$ and that $X$ is translation invariant. Summing (\ref{ineq:bound 2 for E[I_n] wrt to E[J_n]}) over $y\in\Z^d$, combining with (\ref{ineq:bound 1 for E[I_n] wrt to E[J_n]}) and invoking independence, we obtain
    \begin{equation}
        h(n)h(m)\E[I_{n,m}]\le \E[J_{2n,2m}].
    \end{equation}
    Therefore for $\eta>0$ sufficiently small, by taking $0\le s\le t\le T$ and by noting that $I_{\lfloor ns\rfloor,\lfloor nt\rfloor}\le I_{\lfloor ns\rfloor,\lfloor nT\rfloor}$,
    \begin{align}
    \label{ineq:control of E[I_{ns,nt}] by E[J_{ns,nt}]}
        \frac{h(n)^2b_\beta(n)^d}{n^2}\E\left[I_{\lfloor ns\rfloor,\lfloor nt\rfloor}\right]&\le\frac{h(n)^2}{h(\lfloor ns\rfloor)h(\lfloor nT\rfloor)}\frac{b_\beta(n)^d}{n^2}\E\left[J_{2\lfloor ns\rfloor,2\lfloor nT\rfloor}\right]\nonumber\\
        &\le C_\eta\left((sT)^{\eta}+(sT)^{-\eta}\right)\frac{b_\beta(n)^d}{n^2}\E\left[J_{2\lfloor ns\rfloor,2\lfloor nT\rfloor}\right]
    \end{align}
    where we used the Potter bounds and the fact that $h$ is slowly varying. Furthermore, by letting $\mathbb T_n^d:=b_\beta(n)\mathbb T^d$, we have
    \begin{align}
    \label{ineq:bound moment discrete intersection local time}
        \frac{b_\beta(n)^d}{n^2}\E\left[J_{\lfloor ns\rfloor,\lfloor nt\rfloor}\right]&=C\E\left[\frac{1}{n^2}\sum_{i=0}^{\lfloor ns\rfloor}\sum_{j=0}^{\lfloor nt\rfloor}\int_{\mathbb T_n^d}\,\d z\exp\left(\i\left\langle\frac{z}{b_\beta(n)},X_i-X_j'\right\rangle\right)\right]\nonumber\\
        &=C\int_{\mathbb T_n^d}\,\d z\frac{1}{n^2}\sum_{i=0}^{\lfloor ns\rfloor}\sum_{j=0}^{\lfloor nt\rfloor}\phi\left(\frac{z}{b_\beta(n)}\right)^i\phi\left(-\frac{z}{b_\beta(n)}\right)^j.
    \end{align}
    
    \textit{Case 1:} If $1\le d/\beta<3/2$, fix $\eta>0$ sufficiently small and take $p_1,q_1,p_2,q_2\ge1$ such that
    \begin{equation*}
        q_1:=\frac{\beta}{d-\beta+\eta\beta},\quad q_2<\frac{1}{1-\eta},\quad p_k^{-1}+q_k^{-1}=1\text{ $(k=1,2$)}.
    \end{equation*}
    By separating the sums and using two Hölder inequalities, the integrand in (\ref{ineq:bound moment discrete intersection local time}) may be bounded by 
    \begin{equation}
    \label{bound:Hölder inequality bound for moment of discrete local time}
        \left(\frac{\lfloor ns\rfloor}{n}\right)^{1/p_1}\left(\frac{\lfloor nt\rfloor}{n}\right)^{1/p_2}\left(\frac{1}{n}\sum_{i=0}^{\lfloor nT\rfloor}\left|\phi\left(\frac{z}{b_\beta(n)}\right)\right|^{q_1i}\right)^{1/q_1}\left(\frac{1}{n}\sum_{j=0}^{\lfloor nT\rfloor}\left|\phi\left(\frac{z}{b_\beta(n)}\right)\right|^{q_2i}\right)^{1/q_2}.
    \end{equation}
    By $(5.15)$ in \cite{RosenRandomWalks&IntersectionLocalTimes1990}, for any $\varepsilon>0$ sufficiently small and $z\in\R^d$, there is some $C_{\varepsilon,T}>0$ such that for any $q\ge1$,
    \begin{equation*}
        \left(\frac{1}{n}\sum_{i=0}^{\lfloor nT\rfloor}\left|\phi\left(\frac{z}{b_\beta(n)}\right)\right|^{qi}\right)^{1/q}\le\frac{C_{\varepsilon,T}}{1+|\overline z|^{(\beta-\varepsilon)q^{-1}}}.
    \end{equation*}
    where $\overline z$ is the representative of $z$ modulo $2\pi b_\beta(n)$. Consequently, (\ref{bound:Hölder inequality bound for moment of discrete local time}) is bounded by
    \begin{equation}
    \label{bound:Hölder inequality bound for moment of discrete local time 2}
        C_{\varepsilon,T}s^{1/p_1}t^{1/p_2}\frac{1}{(1+|\overline z|^{(\beta-\varepsilon)q_1^{-1}})(1+|\overline z|^{(\beta-\varepsilon)q_2^{-1}})}.
    \end{equation}
    Plugging (\ref{bound:Hölder inequality bound for moment of discrete local time 2}) into (\ref{ineq:bound moment discrete intersection local time}), we get by plainly bounding $t$ by $T$,
    \begin{equation}
    \label{ineq:bound moment discrete intersection local time 2}
        \frac{b_\beta(n)^d}{n^2}\E\left[J_{\lfloor ns\rfloor,\lfloor nt\rfloor}\right]\le C_{\varepsilon,T,\eta}s^{1/p_1},
    \end{equation}
    where $C_{\varepsilon,T,\eta}$ is a finite constant that doesn't depend on $n$ so long as $(\beta-\varepsilon)(q_1^{-1}+q_2^{-1})>d$. One easily checks that this condition is satisfied with our choice of $q_1,q_2$ if $\varepsilon$ is taken sufficiently small, and that our choice of $q_1$ implies $1/p_1=2-d/\beta-\eta$, whence combining (\ref{ineq:control of E[I_{ns,nt}] by E[J_{ns,nt}]}) and (\ref{ineq:bound moment discrete intersection local time 2}) is enough to conclude.

    \textit{Case 2:} If $3/2< d/\beta<2$, we once again see that our previous choice of $p_1,q_1,p_2,q_2$ remains valid. Therefore, we have 
    \begin{equation*}
        \frac{h(n)^2b_\beta(n)^d}{n^2}\E\left[I_{\lfloor ns\rfloor,\lfloor nt\rfloor}\right]\le C_\eta s^{2-d/\beta-\eta}.
    \end{equation*}
    Writing the inverse of the scale function as $n^{2-d/\beta}s_\beta(n)^{-d}h(n)^{-2}$, noticing that $h(n)^{-2}=O(1)$ since this regime implies that $X$ is transient and applying the Potter bounds to the slowly varying function $n\mapsto s_\beta(n)^{d}$, we get 
    \begin{align}
    \label{ineq:bound scaled moment intersection d/beta>3/2}
        n^{-1/2}\E\left[I_{\lfloor ns\rfloor,\lfloor nt\rfloor}\right]&=\frac{n^{3/2-d/\beta}}{s_\beta(n)^{d}h(n)^2}\frac{h(n)^2b_\beta(n)^d}{n^2}\E\left[I_{\lfloor ns\rfloor,\lfloor nt\rfloor}\right]\nonumber\\
        &\le C_\eta n^{3/2-d/\beta+\eta}s^{2-d/\beta-\eta}.
    \end{align}
    By the convention we made on $I_{n,m}$, if $s<1/n$ then $\lfloor ns\rfloor=0$ and $I_{\lfloor ns\rfloor,\lfloor nt\rfloor}=0$, whence the inequality we are trying to prove is trivially true. Therefore, we suppose that $s\ge1/n$ which yields $n^{3/2-d/\beta+\eta}\le s^{d/\beta-3/2-\eta}$ so long as $\eta<d/\beta-3/2$. Plugging this into (\ref{ineq:bound scaled moment intersection d/beta>3/2}) yields the expected result, since in this regime we also have $g(n)^{-1/2}=O(1)$.

    \textit{Case 3:} The case $d/\beta=3/2$ is essentially contained in the previous one, except that this time we have 
    \begin{equation*}
        (ng(n))^{-1/2}\E\left[I_{\lfloor ns\rfloor,\lfloor nt\rfloor}\right]\le C_\eta s_\beta(n)^{-d}g(n)^{-1/2} s^{1/2-\eta}.
    \end{equation*}
    By (\ref{lim:technical limit with s and g}), $s_\beta(n)^{-d}g(n)^{-1/2}=O(1)$ and so we conclude as previously.

    \textit{Case 4:} We now finally examine the case $d/\beta<1$, which implies $d=1<\beta$. We notice that the choice of $p_1,q_1,p_2,q_2$ from \textit{Case 1} is still valid so long as $\eta\in(1-1/\beta,1)$. Fixing such an $\eta$ that is close to $1-1/\beta$, letting $\tilde \eta:=\eta-1+\beta>0$, choosing $\eta=\tilde\eta+1-1/\beta$ in (\ref{ineq:control of E[I_{ns,nt}] by E[J_{ns,nt}]}) and then reasoning exactly as in \textit{Case 1} yields
    \begin{equation*}
        \frac{h(n)^2b_\beta(n)}{n^2}\E\left[I_{\lfloor ns\rfloor,\lfloor nt\rfloor}\right]\le C_{\eta,T}(s\wedge t)^{2-d/\beta-2\eta}=C_{\tilde \eta,T}(s\wedge t)^{1/\beta-2\tilde \eta}.
    \end{equation*}
    We must now show that
    \begin{equation}
    \label{ineq:scale function is O(1) d/beta<1}
        \frac{S_{d,\beta}(n)n^2}{h(n)^2b_\beta(n)}=O(1).
    \end{equation}
    By $(2.j)$, \cite{LeGallRosenRangeOfStableRandomWalks1991}, we have for some constant $C>0$,
    \begin{equation*}
        h(n)\underset{n\rightarrow\infty}\sim C\sum_{k=1}^nb_\beta(k)^{-1}.
    \end{equation*}
    Using the Potter bounds we get for any $\varepsilon>0$,
    \begin{equation*}
        \sum_{k=1}^n\frac{b_\beta(n)}{b_\beta(k)}\ge C_\varepsilon\sum_{k=1}^n\left(\frac{n}{k}\right)^{1/\beta-\varepsilon}\underset{n\rightarrow\infty}\sim \frac{C_\varepsilon}{1-1/\beta+\varepsilon} n.
    \end{equation*}
    Thus,
    \begin{equation*}
        \frac{n}{b_\beta(n)}=O(h(n))\implies \frac{n}{h(n)b_\beta(n)}=O(1).
    \end{equation*}
    Recalling that $S_{d,\beta}(n)=b_\beta(n)^{-1}$, this proves (\ref{ineq:scale function is O(1) d/beta<1}) and concludes the proof.
\end{proof}

\section{Renormalized Self-Intersection Local Times of Stable Processes in $\R^d$}
\label{section:Intersection Local Times of Stable Processes in R^d}

The aim of this section is to provide a simple construction of the Intersection Local Time of a stable process in $\R^d$ based on the analogous construction for planar Brownian motion performed in \cite{LeGallIntersectionsMarchesAléatoires1986}, and to explain how such a quantity naturally appears when studying the fluctuations of the range. The starting point of Le Gall in \cite{LeGallIntersectionsMarchesAléatoires1986} was to notice that the following decomposition of $R_n$ holds for all $n,p\ge1$ :
\begin{equation}
    \label{equation:decomposition of the range}
    R_n=\sum_{k=1}^p\left|X\left(\frac{k-1}{p}n,\frac kpn\right)\right|-\sum_{k=2}^p\left|X\left(0,\frac{k-1}{p}n\right)\cap X\left(\frac{k-1}{p}n,\frac kpn\right)\right|.
\end{equation}

We essentially divide $\{1,\dots,n\}$ into $p$ intervals, add the range of the walk on each of the subintervals and susbstract the intersections with the past of the walk which would be counted too many times in the range. Hence, there is a natural competition between the number of new sites visited and the number of sites visited multiple times, or in other words the amount of self-intersections of $X$.

As one can imagine, if $X$ is "sufficiently" transient it should spend more time discovering new sites than it does self-intersecting. This turns out to be the case, and so when $(R_n)_{n\ge1}$ is centered and scaled, the second term in the right-hand side of (\ref{equation:decomposition of the range}) vanishes as $n\rightarrow\infty$, and what remains is a sum of $i.i.d.$ random variables hence the fluctuations are Gaussian by the usual CLT. Roughly speaking, when the increments of $X$ aren't too large compared to $d$, then we may rather expect $X$ to spend more time self-intersecting than discovering new sites and in this case the roles of the two terms of the right-hand side of (\ref{equation:decomposition of the range}) are reversed. As $X$ is scaled and becomes $U^\beta$, one would therefore expect to obtain as $n\rightarrow\infty$ a random variable that counts the self-intersections of $U^\beta$. Such a random variable exists precisely when $1\le d/\beta<3/2$ (see \cite{Rosen1988}) and is called the \textit{Renormalized Self-Intersection Local Time of $U^\beta$}. The aim of this section is to go over a basic construction of the Renormalized Self-Intersection Local Time that we note $\gamma^\beta$, whilst highlighting some of its properties that will be important in the rest of the paper.

Let Let $\tilde U^\beta$ be an independent copy of $U^\beta$. Then for $1\le d/\beta<3/2$, one can construct a family $(\alpha^\beta(x,\cdot))_{x\in\R^{d}}$ of random Radon measures on $\R_+^2$ such that $x\mapsto\alpha^\beta(x,\cdot)$ is continuous on $\R^d$ for the vague topology of measures and for any Borel set $A$ of $\R_+^2$ and Borel function $f:\R^{d}\rightarrow\R_+$ we have the following equality :
\begin{equation}
    \label{equation:occupation density formula for intetrsection of 2 stable processes}
    \int_Af(U^\beta_{s_2}-\tilde U^\beta_{s_1})\,\d s_1\,\d s_2=\int_{\R^d}f(x)\alpha^\beta(x,A)\,\d x
\end{equation}
(see \cite{ExponentialAsymptoticsILTChenRosen2005}). The measures $(\alpha^\beta(x,\cdot))_{x\in\R^d}$ are known as the \textit{Intersection Local Times of $U^\beta$ and $\tilde U^\beta$}. Essentially, $\alpha^\beta(x,A)$ measures the time spent by $U^\beta$ and $\tilde U^\beta$ seperated by the quantity $x$ over the time set $A$, and the occupation density type formula (\ref{equation:occupation density formula for intetrsection of 2 stable processes}) allows us to formally interpret $\alpha^\beta(x,A)$ as 
$$\alpha^\beta(x,A)=\int_A\delta_{\{0\}}(U^\beta_{s_2}-\tilde U^\beta_{s_1}-x)\,\d s_1\,\d s_2.$$
 For all $t\ge0$, we note $\alpha_t^\beta:=\alpha^\beta(0,[0,t]^2)$. The scaling property of $U^\beta$ implies a scaling property for the process $\left(\alpha_t^\beta\right)_{t\ge0}$. Indeed, by considering a continuous and compactly-supported function $f$ with integral $1$ such that $f(0)>0$ and letting $f_\varepsilon(x):=\varepsilon^{d}f(\varepsilon x)$, then by (\ref{equation:occupation density formula for intetrsection of 2 stable processes}) we have for all $k,t,\varepsilon>0$, 
$$\int_{\R^d}f_\varepsilon(x)\alpha^\beta(x,[0,kt])\,\d x\overset{(\d)}=k^{2-d/\beta}\int_{\R^d}f_{\varepsilon k^{1/\beta}}(x)\alpha^\beta(x,[0,t])\,\d x,$$
and letting $\varepsilon\rightarrow0$ yields 
\begin{equation}
    \label{equation:scaling of intersection local time of 2 stable processes}
    \alpha^\beta_{kt}\overset{(\d)}=k^{2-d/\beta}\alpha^\beta_t
\end{equation}
 since $f_\varepsilon\rightarrow\delta_{\{0\}}$ in the sense of distributions.

We now wish to study self-intersections of $U^\beta$. Rosen constructs in \cite{Rosen1988} a family $(\rho^\beta(x,\cdot))_{x\in\R^d}$ of random measures such that $x\mapsto\rho^\beta(x,\cdot)$ is continuous on $\R^d\backslash\{0\}$ for the vague topology of measures, and for any Borel set $A$ of $\R_+^2$ and Borel function $f:\R^{d}\rightarrow\R_+$ we have the following equality :
\begin{equation}
    \label{equation:occupation density formula for self-intetrsection of a stable processes}
    \int_Af(U^\beta_{s_2}- U^\beta_{s_1})\,\d s_1\,\d s_2=\int_{\R^d}f(x)\rho^\beta(x,A)\,\d x.
\end{equation}

Furthermore, Rosen shows that $x\mapsto\rho^\beta(x,[0,t]^2)$ is singular at $x=0$ and determines the exact order or the singularity, which turns out to be $Kt/|x|^{d-\beta}$ for some constant $K>0$. We give an explanation for this singularity in what follows and explain how to naturally introduce a renormalized version of $(\rho^\beta(x,\cdot))_{x\in\R^d}$ which shall be denoted $(\gamma^\beta(x,\cdot))_{x\in\R^d}$ and will be referred to as the \textit{renormalized self-intersection local time of $U^\beta$}. 

For a Borel set $A$ of $\R_+^2$, we let $A_\le:=\{(x,y)\in A\mid x\le y\}$. By symmetry, instead of letting the time indices vary over $A$ we rather let them vary over $A_\le$. To lighten the notations, we let  $\rho^\beta(\cdot):=\rho^\beta(0,\cdot)$. For $t>0$ and for $j\ge1$, $1\le i\le 2^{j-1}$, we let
$$A_t^{(i,j)}:=\left[\frac{2i-2}{2^j}t,\frac{2i-1}{2^j}t\right)\times\left(\frac{2i-1}{2^j}t,\frac{2i}{2^j}t\right].$$
The reason for introducing the sets $A_t^{(i,j)}$ is firstly because $$[0,t]_{\le}^2=\bigcup_{j=1}^\infty\bigcup_{i=1}^{2^{j-1}}A_t^{(i,j)},$$ but also that from properies of $U^\beta$, we have for all $t>0$, $j\ge1$, $1\le i\le 2^{j-1}$ :
$$\left(U_{s_2}^\beta-U_{s_1}^\beta\right)_{(s_1,s_2)\in A_t^{(i,,j)}}\overset{\mathrm{(d)}}{=}\left(U_{s_2}^\beta-\tilde U_{s_1}^\beta\right)_{(s_1,s_2)\in[0,2^{-j}t]^2}$$
and so as previously, it follows from (\ref{equation:occupation density formula for intetrsection of 2 stable processes}) that 
\begin{equation}
\label{equation : scaling of self intersection local time over dyadic sets}
    \rho^\beta\left(A_t^{(i,j)}\right)\overset{\mathrm{(d)}}=2^{-j(2-d/\beta)}\alpha_t^\beta.
\end{equation}

Furthermore from the independence of increments of $U^\beta$, for $j\ge1$ and $1\le i\neq i'\le 2^{j-1}$, we have
$$\left(U_{s_2}^\beta-U_{s_1}^\beta\right)_{(s_1,s_2)\in A_t^{(i,j)}}\indep\left(U_{s_2}^\beta-U_{s_1}^\beta\right)_{(s_1,s_2)\in A_t^{(i,j)}}$$
in such a way that 
\begin{equation}
    \label{equation : independence of intersection local time over disjoint dyadic sets}\rho^\beta\left(A_t^{(i,j)}\right)\indep\rho^\beta\left(A_t^{(i',j)}\right).
\end{equation}
Introducing the sets $$A_t^{(j)}:=\bigcup_{i=1}^{2^{j-1}}A_t^{(i,j)},$$
we therefore see that on one hand, for all $j\ge1$, 
$$u_{j}^{1}:=\E\left[\rho^\beta\left(A_t^{(j)}\right)\right]\overset{\mathrm{(\ref{equation : scaling of self intersection local time over dyadic sets})}}=2^{-j(1-d/\beta)-1}\E\left[\alpha_t^\beta\right]$$
and on the other,
\begin{eqnarray*}
    u_j^2:=\E\left[\left\{\rho^\beta\left(A_t^{(j)}\right)\right\}^2\right]&\overset{\mathrm{(\ref{equation : independence of intersection local time over disjoint dyadic sets})}}=&\sum_{i=1}^{2^{j-1}}\E\left[\left\{\rho^\beta\left(A_t^{(i,j)}\right)\right\}^2\right]\overset{\mathrm{(\ref{equation : scaling of self intersection local time over dyadic sets})}}=2^{-j(3-2d/\beta)-1}\E\left[\left\{\alpha_t^\beta\right\}^2\right].
\end{eqnarray*}
The fact that $1\le d/\beta<3/2$ precisely implies that the series $\sum u_j^1$ is divergent, while $\sum u_j^2$ is convergent. 

By writing 
$$\rho^\beta\left([0,t]_\le^2\right)=\sum_{j\ge1}\E\left[\rho^\beta\left(A_t^{(j)}\right)\right]+\sum_{j\ge1}\left\{\rho^\beta\left(A_t^{(j)}\right)\right\},$$
the first term is infinite and the second has a second moment and so is $a.s.$ finite. This automatically implies that $\rho^\beta([0,t]_\le^2)=\infty$ $a.s.$. The previous equality incites us to consider the variable :

$$\gamma_t^\beta:=\sum_{j\ge1}\left\{\rho^\beta\left(A_t^{(j)}\right)\right\}\quad\left(=\int_{[0,t]_\le^2}\left\{\delta_{\{0\}}\left(U_{s_2}^\beta-U_{s_1}^\beta\right)\right\}\,\d s_1\,\d s_2\right)$$
which converges in $L^2$. The occupation density formula (\ref{equation:occupation density formula for self-intetrsection of a stable processes}) becomes 
$$\int_{A_\le}\left(f\left(U_{s_2}^\beta-U_{s_1}^\beta\right)-\E\left[f\left(U_{s_2}^\beta-U_{s_1}^\beta\right)\right]\right)\,\d s_1\,\d s_2=\int_{\R^d}f(x)\gamma^\beta(x,A_\le)\,\d x$$
for Borel sets $A\subseteq\R_+^2$ and Borel functions $f:\R^d\rightarrow\R_+$, and this time the mapping $x\mapsto\gamma^\beta(x,\cdot)$ is vaguely continuous on $\R^d$ (see \cite{Rosen1996}). Furthermore, the scaling property (\ref{equation:scaling of intersection local time of 2 stable processes}) transfers over to $\gamma^\beta$ :
$$\gamma_{kt}^\beta\overset{(\d)}=k^{2-d/\beta}\gamma_t^\beta\quad\text{for all $k,t>0$,}$$
which may also be seen using the occupation density formula. Also for all $0\le s<t$, by introducing the notation
$$\gamma_{s,t}^\beta:=\gamma^\beta([0,s]\times[s,t]),$$
we see that for all $s,t\ge0$, by writing
$$[0,t+s]_\le^2=[0,t]_\le^2\cup[t,t+s]_\le^2\cup[0,t]\times[t,t+s]$$
and using the independence and stationarity of the increments of $U^\beta$, the following equality in distribution holds :
\begin{equation}
    \label{equation:decomposition of self-intersection local time}
    \gamma_{t+s}^\beta\overset{\mathrm{(d)}}=\gamma_t^\beta+\tilde\gamma_s^\beta+\gamma_{t,t+s}^\beta,
\end{equation}
where $\left(\tilde\gamma^\beta_s\right)_{s\ge0}$ is the renormalized self-intersection local time of $\left(\tilde U^\beta_s:=U^\beta_{t+s}-U^\beta_t\right)_{s\ge0}$. Note that the three variables appearing in the right-hand side of (\ref{equation:decomposition of self-intersection local time}) are independent.

Another construction of the renormalized self-intersection is possible. Indeed, one can start from the clear formal definition
\begin{equation}
\label{def:formal definition of renormalized SILT}
    \gamma_t^\beta=\int_{[0,t]_\le^2}\delta_{\{0\}}\left(U_{s_2}^\beta-U_{s_1}^\beta\right)\,\d s_1\,\d s_2-\E\left[\int_{[0,t]_\le^2}\delta_{\{0\}}\left(U_{s_2}^\beta-U_{s_1}^\beta\right)\,\d s_1\,\d s_2\right],
\end{equation}
define an alternate family of continuous and bounded functionals $\left(\gamma_{\varepsilon,t}^\beta\right)_{\varepsilon,t>0}$ by replacing $\delta_{\{0\}}$ by an approximation $p_\varepsilon$ (like the heat-kernel for example) in (\ref{def:formal definition of renormalized SILT}), and showing some uniform Hölder continuity in $\varepsilon$ using Kolmogorov's criterion (see \cite{LDPforRSILTBassChenRosen2005}), then letting $\varepsilon\rightarrow0$ and showing existence of the limit in some appropriate sense. The reason for recalling the (simpler) construction of $\gamma^\beta$ using dyadic decompositions is that the construction shall be exploited in Section \ref{section:the case 1<=d/beta<3/2}.

\section{The FCLT for the Range}
\label{section:FCLT}

\subsection{The case $1\le d/\beta<3/2$}
\label{section:the case 1<=d/beta<3/2}

Let $T>0$. For all $n\ge1$ and $t\in[0,T]$, define :
$$\overline R_t^{(n)}:=\frac{h(n)^2b_\beta(n)^d}{n^2}\left\{R_{\lfloor nt\rfloor}\right\},\quad\overline{\mathcal R}_t^{(n)}:=\frac{h(n)^2b_\beta(n)^d}{n^2}\left\{\mathcal R_{nt}\right\}.$$

As explained in the strategy of proof, we shall first show tightness of the sequence of laws of $\overline{\mathcal R}^{(n)}$ using Kolmogorov's criterion, then show the convergence of the finite-dimensional distributions of $\overline R^{(n)}$ using Cramér-Wold. In the case where $X$ is transient, $h(n)$ converges to $q^{-1}$ as $n\rightarrow\infty$, where $q=\p(\forall k\ge1,X_k\neq 0)$. When $X$ is transient, then $h(n)$ goes to infinity as $n\rightarrow\infty$. In either case, it is well known that $h$ is slowly varying. Throughout this section, we shall leave $h$ regardless, but one can keep in mind that in the transient case, the results obtained concern the quantity $(b_\beta(n)^d/n^2)\left\{R_{\lfloor nt\rfloor}\right\}$ and the limiting variables are simply to be multiplied by $q^2$. In this setting, Theorem $6.8.$ from \cite{LeGallRosenRangeOfStableRandomWalks1991} yields :
\begin{equation}
\label{convergence:CLT for 1<=d/beta<3/2}
            \overline R_1^{(n)}\distribution-\gamma_1^\beta.
\end{equation}

\textit{Step 1:} We see that the following generalization of \cite{LeGallRosenRangeOfStableRandomWalks1991}, Lemma $6.7.$, holds for all $p\ge1$ :
\begin{equation}
    \label{ineq:bound 2p-th centered moment}
    \E\left[\{R_n\}^{2p}\right]\le C\frac{n^{2p(2-d/\beta)}}{s_\beta(n)^{2dp}h(n)^{4p}} \le C_m\frac{n^{2p(2-d/\beta)}}{s_\beta(n+m)^{2dp}h(n+m)^{4p}}
\end{equation}
where $m\ge 1$ is fixed in such a way that $s_\beta(n+m)h(n+m)\ge C>0$ for all $n\ge1$ and for some constant $C_m<\infty$ depending only on $m$, where we also replaced $b_\beta(n)$ by $n^{1/\beta}s_\beta(n)$. We show the following Lemma, which will automatically yield tightness and uniform local Hölder regularity of the processes $\overline{\mathcal R}^{(n)}$.
\begin{lemma}
\label{lemma:Hölder bound moments of scaled range 1<=d/beta<3/2}
    Let $p\ge1$. Then for any $\eta>0$ sufficiently small, there is some $C_{\eta,p,T}>0$ such that for all $n\ge1$, $(s,t)\in[0,T]_\le^2$,
    \begin{equation}
        \label{ineq:Kolmogorov bound of interpolated rescaled range}
        \E\left[\left(\overline{\mathcal R_t}^{(n)}-\overline{\mathcal R_s}^{(n)}\right)^{2p}\right]\le C_{\eta,p,T}|t-s|^{2p(2-d/\beta)-\eta}.
    \end{equation}
\end{lemma}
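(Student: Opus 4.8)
The plan is to reduce the statement to a single estimate on the \emph{discrete} increments of the range and then transfer it to the linear interpolation. Precisely, I would first establish that for all integers $0\le a<b\le\lfloor nT\rfloor$,
\begin{equation}
\label{ineq:discreteplan}
S_{d,\beta}(n)^{2p}\,\E\left[\{R_b-R_a\}^{2p}\right]\le C_{\eta,p,T}\left(\frac{b-a}{n}\right)^{2p(2-d/\beta)-\eta},\qquad S_{d,\beta}(n)=\frac{h(n)^2b_\beta(n)^d}{n^2},
\end{equation}
and then deduce \eqref{ineq:Kolmogorov bound of interpolated rescaled range} from it. Note that the exponent $2p(2-d/\beta)-\eta$ is positive for $\eta$ small, since $2-d/\beta>1/2$ in this regime.

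To prove \eqref{ineq:discreteplan} I start from the two-block instance of \eqref{equation:decomposition of the range},
$$R_b-R_a=|X(a+1,b)|-|X(0,a)\cap X(a+1,b)|,$$
center, and apply the triangle inequality in $L^{2p}$, so that it suffices to bound the two centered $2p$-th moments separately. Since $|X(a+1,b)|$ is distributed as $R_{b-a-1}$ by the Markov property and translation invariance, the bound \eqref{ineq:bound 2p-th centered moment} applies to the first term; after multiplication by $S_{d,\beta}(n)^{2p}$ every power of $n$ cancels, leaving $((b-a)/n)^{2p(2-d/\beta)}$ times a ratio of the slowly varying functions $h,s_\beta$ evaluated at $n$ and at $b-a-1$, which the Potter bounds \eqref{ineq:Potter bounds for regularly varying functions} absorb into the exponent loss $-\eta$ (using $(b-a)/n\le T$). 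The second term equals in law $I_{a,b-a}$ under the convention of Section~\ref{section:Preliminary results and notations}; there I apply \eqref{ineq:bound for centered moments of intersections} to replace its centered moment by $\E[I_{a,b-a}]^{2p}$, and then Lemma~\ref{lemma:Hölder bound for moments of I}, whose exponent is $\chi_{d,\beta}=2-d/\beta$ here, to get $S_{d,\beta}(n)\E[I_{a,b-a}]\le C_\eta((b-a)/n)^{2-d/\beta-\eta}$ via $\tfrac{a}{n}\wedge\tfrac{b-a}{n}\le\tfrac{b-a}{n}$. Raising to the power $2p$ and relabelling $\eta$ gives \eqref{ineq:discreteplan}.

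For the transfer, set $a=\lfloor ns\rfloor$, $b=\lfloor nt\rfloor$ and use the identity $\mathcal R_{nu}=(1-\theta_u)R_{\lfloor nu\rfloor}+\theta_u R_{\lfloor nu\rfloor+1}$ with $\theta_u=nu-\lfloor nu\rfloor\in[0,1)$, which writes $\mathcal R_{nt}-\mathcal R_{ns}$ as a convex combination $\sum_{i,j\in\{0,1\}}w_{ij}(R_{b+j}-R_{a+i})$ with $w_{ij}\ge0$ and $\sum_{i,j}w_{ij}=1$. When $b-a\ge2$, Jensen's inequality together with \eqref{ineq:discreteplan} bounds $\E[\{\mathcal R_{nt}-\mathcal R_{ns}\}^{2p}]$ by the maximum over the four corners; each index gap is at most $b-a+1\le 3n(t-s)$, since $n(t-s)\ge b-a-1\ge1$ in this range, so every corner contributes at most $C_{\eta,p,T}(t-s)^{2p(2-d/\beta)-\eta}$, as required.

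The case $b-a\le1$ is the one I expect to be the main obstacle: there $t-s$ can be far smaller than $1/n$, and the convexity bound is too lossy, because it discards the small interpolation weights and replaces a genuinely small increment by a one-step increment of order one. I would instead argue directly from the identity: if $b=a$ then $\mathcal R_{nt}-\mathcal R_{ns}=n(t-s)(R_{a+1}-R_a)$, while if $b=a+1$ then $\mathcal R_{nt}-\mathcal R_{ns}$ is a combination of the two one-step increments $R_{a+1}-R_a$ and $R_{a+2}-R_{a+1}$ with nonnegative coefficients summing to $n(t-s)<2$. Since a one-step increment is Bernoulli, its centered $2p$-th moment is bounded, and in both situations $\E[\{\mathcal R_{nt}-\mathcal R_{ns}\}^{2p}]\le C_p(n(t-s))^{2p}$ with $t-s<2/n$. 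Writing this scaled quantity as
$$C_p\,(S_{d,\beta}(n)n)^{2p}\,(t-s)^{2p(d/\beta-1)+\eta}\,(t-s)^{2p(2-d/\beta)-\eta},$$
the key point is that $d/\beta\ge1$ forces the exponent $2p(d/\beta-1)+\eta\ge0$, so the bound $t-s<2/n$ controls the first two factors by $C(S_{d,\beta}(n)n)^{2p}n^{-2p(d/\beta-1)-\eta}=C\,h(n)^{4p}s_\beta(n)^{2dp}n^{-\eta}$, which is bounded in $n$ because $h$ and $s_\beta$ are slowly varying. Combining the three cases yields \eqref{ineq:Kolmogorov bound of interpolated rescaled range} with a constant uniform in $n$, which is exactly what is needed to feed Kolmogorov's criterion and to obtain the uniform local Hölder continuity announced in the strategy of proof.
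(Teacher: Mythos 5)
Your proof is correct, and its core runs on the same ingredients as the paper's own argument: the two-block decomposition $R_b-R_a=|X(a+1,b)|-|X(0,a)\cap X(a+1,b)|$, the centered moment bound \eqref{ineq:bound 2p-th centered moment}, the intersection estimates \eqref{ineq:bound for centered moments of intersections} together with Lemma \ref{lemma:Hölder bound for moments of I}, and the Potter bounds \eqref{ineq:Potter bounds for regularly varying functions} to absorb the slowly varying factors into the loss $\eta$. The organization, however, is genuinely different. The paper works directly with the interpolated increment, writing $\mathcal R_{nt}-\mathcal R_{ns}=R_{\lfloor nt\rfloor}-R_{\lfloor ns\rfloor}+A_{n,s,t}$ and discarding the interpolation correction on the grounds that $|A_{n,s,t}|\le2$; you instead first prove a purely discrete increment estimate, transfer it to $\mathcal R$ by writing the interpolated increment as a convex combination of the four corner increments and applying Jensen, and then treat the regime $\lfloor nt\rfloor-\lfloor ns\rfloor\le1$ by a separate direct computation. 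This last step is where your route buys something real: a term bounded by a constant cannot be absorbed into $C|t-s|^{2p(2-d/\beta)-\eta}$ when $t-s\lesssim 1/n$, and in that regime the paper's displayed inequality \eqref{ineq:Hölder decomposition 1<= d/beta < 3/2} has right-hand side zero (since $\{R_0\}=0$ and $I_{n,0}=0$ by convention) while its left-hand side need not vanish, so the paper is glossing over exactly the case you isolate. Your resolution—bounding the increment by $C_p(n(t-s))^{2p}$ and trading $(t-s)^{2p(d/\beta-1)+\eta}\le Cn^{-2p(d/\beta-1)-\eta}$ against $(S_{d,\beta}(n)n)^{2p}$, which leaves $h(n)^{4p}s_\beta(n)^{2dp}n^{-\eta}=O(1)$ precisely because $d/\beta\ge1$ and $h,s_\beta$ are slowly varying—is correct and closes this case. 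Two cosmetic remarks: your use of $I_{a,b-a}$ where the paper's convention gives $I_{a,b-a-1}$ is harmless by monotonicity in the second index, and your discrete estimate should be stated for indices up to $\lfloor nT\rfloor+1$ (or with $T$ replaced by $T+1$) so that the corners involving $R_{b+1}$ are covered; neither affects the validity of the argument.
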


\begin{proof}
    Note that 
    \begin{align*}
    \mathcal R_{nt}-\mathcal R_{ns}&=R_{\lfloor nt\rfloor}-R_{\lfloor ns\rfloor}+A_{n,s,t}\\
        &= \left|X(\lfloor ns\rfloor+1,\lfloor nt\rfloor)\right|-|X(0,\lfloor ns\rfloor)\cap X(\lfloor ns\rfloor+1,\lfloor nt\rfloor)|+A_{n,s,t},
    \end{align*}
    where $A_{n,s,t}:=(R_{\lfloor nt\rfloor+1}-R_{\lfloor nt\rfloor})(nt-\lfloor nt\rfloor)-(R_{\lfloor ns\rfloor+1}-R_{\lfloor ns\rfloor})(ns-\lfloor ns\rfloor)$. Since $|A_{n,s,t}|\le2$, we have for all $p\ge1$ and some  $C_p>0$ that may eventually change from line to line, by letting $r_{s,t}^n:=\lfloor nt\rfloor-\lfloor ns\rfloor-1$, that
    \begin{align}
    \label{ineq:Hölder decomposition 1<= d/beta < 3/2}
        \E\left[\left\{\mathcal R_{nt}-\mathcal R_{ns}\right\}^{2p}\right]&\overset{\mathrm{(\ref{ineq:bound for centered 2p-th moment of intersections})},\mathrm{(\ref{ineq:bound for centered moments of intersections})}}\le C_p\left(\E\left[\left\{R_{r_{s,t}^n}\right\}^{2p}\right]+\E\left[I_{\lfloor ns\rfloor,r_{s,t}^n}\right]^{2p}\right)\nonumber\\
        &\overset{\mathrm{(\ref{ineq:bound 2p-th centered moment})}}\le C_p\left(\E\left[\left\{R_{r_{s,t}^n}\right\}^{2p}\right]+\E\left[I_{\lfloor ns\rfloor,\lfloor{n(t-s)}\rfloor}\right]^{2p}\right),
    \end{align}
    since $r_{s,t}^n\le \lfloor n(t-s)\rfloor$. We define $V(x):=s_\beta(x+m)^{2dp}h(x+m)^{4p}$ for $x\ge-m$. Since $s_\beta$ and $h$ are both slowly varying, so is $V$ and we have for all $x\ge0$ and some $C>0$ not depending on $x$ that :
    \begin{equation*}
        V(x-m)\le C V(x).
    \end{equation*}
    As a consequence, we have for all $n\ge1$,
    \begin{equation*}
         h(n)^{4p}s_\beta(n)^{2dp}\le CV(n),
    \end{equation*}
    and so the fact that $r_{s,t}^n\le \lfloor n(t-s)\rfloor$ yields for $\eta$ sufficiently small
    \begin{align}
        \frac{h(n)^{4p}b_\beta(n)^{2dp}}{n^{4p}}\E\left[\left\{R_{r_{s,t}^n}\right\}^{2p}\right]&\overset{\mathrm{(\ref{ineq:bound 2p-th centered moment})}}\le C\left(\frac{r_{s,t}^n}{n}\right)^{2p(2-d/\beta)}\frac{V(n)}{V(r_{s,t}^n)}\nonumber\\
        &\overset{\mathrm{(\ref{ineq:Potter bounds for regularly varying functions})}}\le C_\eta\left(\frac{r_{s,t}^n}{n}\right)^{2p(2-d/\beta)-\eta}\vee\left(\frac{r_{s,t}^n}{n}\right)^{2p(2-d/\beta)+\eta}\nonumber\\
        &\le C_{\eta,T}\left(\frac{ r_{s,t}^n}{n}\right)^{2p(2-d/\beta)-\eta}\nonumber\\
        &\le C_{\eta,T}(t-s)^{2p(2-d/\beta)-\eta}\label{ineq:Holder bound for 2p-th centered moment},
    \end{align}
    where we used in the third inequality the fact that $(r_{s,t}^n/n)^{2\eta}$ is uniformly bounded by some constant $C_T>0$, since $s,t\in[0,T]$. Furthermore, by Lemma \ref{lemma:Hölder bound for moments of I}, we have 
    \begin{equation}
        \frac{h(n)^{4p}b_\beta(n)^{2dp}}{n^{4p}}\E\left[I_{\lfloor ns\rfloor,\lfloor n(t-s)\rfloor}\right]^{2p}\le C_{\eta,p,T}|t-s|^{2p(2-d/\beta)-\eta}.
    \end{equation}
    Combining this inequality and (\ref{ineq:Holder bound for 2p-th centered moment}) with (\ref{ineq:Hölder decomposition 1<= d/beta < 3/2}) yields the desired result
    \end{proof}
    \begin{corollary}
        \label{corollary:local uniform Hölder continuity of scaled range}
    There is a version of $\left(\overline{\mathcal R}^{(n)}\right)_{n\ge1}$ such that for all $T>0$, $\chi\in(0,2-d/\beta)$, there is some $a.s.$ finite random variable $C(T,\chi)$ depending only on $T$ and $\chi$ such that $a.s.$, for all $(s,t)\in[0,T]_\le^2$ and $n\ge1$,
        \begin{equation}
            \label{ineq:local Hölder continuity of scaled range}
            \left|\overline{\mathcal R_t}^{(n)}-\overline{\mathcal R}_s^{(n)}\right|\le C(T,\chi)|t-s|^\chi.
        \end{equation}
    In particular, the sequence $\left(\overline{\mathcal R}^{(n)}\right)_{n\ge1}$ is tight in $\mathcal C([0,T])$.
    \end{corollary}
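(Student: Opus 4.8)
The plan is to read the moment bound (\ref{ineq:Kolmogorov bound of interpolated rescaled range}) of Lemma \ref{lemma:Hölder bound moments of scaled range 1<=d/beta<3/2} as a quantitative Kolmogorov continuity estimate and to feed it into the Garsia--Rodemich--Rumsey (GRR) inequality, whose virtue here is that it converts the moment bound into a \emph{pathwise} Hölder bound with a random prefactor equal to an explicit functional of the trajectory, whose expectation can be controlled uniformly in $n$. I first note that each $\overline{\mathcal R}^{(n)}$ is piecewise linear, hence already continuous, so no modification is needed to produce a continuous version; the whole content of the corollary is the estimate (\ref{ineq:local Hölder continuity of scaled range}) \emph{with a constant whose law does not degenerate as $n\to\infty$}. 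Moreover $\overline{\mathcal R}^{(n)}_0=0$ since $\{\mathcal R_0\}=\{R_0\}=0$, so only the modulus of continuity has to be handled.

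Fix $p\ge1$ and an exponent $\gamma>1/p$ to be chosen, and apply the GRR inequality with gauges $\Psi(x)=x^{2p}$ and $p(u)=u^\gamma$. Setting $\Gamma_n:=\int_0^T\!\int_0^T|\overline{\mathcal R}^{(n)}_u-\overline{\mathcal R}^{(n)}_v|^{2p}|u-v|^{-2p\gamma}\,\d u\,\d v$, the inequality yields, for a constant $C_{p,\gamma}$ depending only on $p$ and $\gamma$ and for all $(s,t)\in[0,T]_\le^2$,
\[
\left|\overline{\mathcal R}^{(n)}_t-\overline{\mathcal R}^{(n)}_s\right|\le C_{p,\gamma}\,\Gamma_n^{1/(2p)}\,|t-s|^{\gamma-1/p}.
\]
Taking expectations and combining Fubini with Lemma \ref{lemma:Hölder bound moments of scaled range 1<=d/beta<3/2},
\[
\E[\Gamma_n]\le C_{\eta,p,T}\int_0^T\!\int_0^T|u-v|^{2p(2-d/\beta)-\eta-2p\gamma}\,\d u\,\d v,
\]
which is finite \emph{and independent of $n$} as soon as $2p(2-d/\beta)-\eta-2p\gamma>-1$, i.e. $\gamma<(2-d/\beta)+(1-\eta)/(2p)$. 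Since the constant delivered by Lemma \ref{lemma:Hölder bound moments of scaled range 1<=d/beta<3/2} carries no dependence on $n$, this gives $\sup_{n\ge1}\E[\Gamma_n]<\infty$.

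It then remains to read off the admissible exponents and to conclude. Putting $\chi:=\gamma-1/p$, the constraint above reads $\chi<(2-d/\beta)-(1+\eta)/(2p)$, so letting $p\to\infty$ and $\eta\to0$ shows every $\chi\in(0,2-d/\beta)$ is attainable. For such a $\chi$ the displayed bound holds with random constant $C(T,\chi):=C_{p,\gamma}\,\Gamma_n^{1/(2p)}$, whose $2p$-th moment is bounded uniformly in $n$ by $C_{p,\gamma}^{2p}\sup_n\E[\Gamma_n]<\infty$; this is precisely the uniform-in-$n$ control (\ref{ineq:local Hölder continuity of scaled range}). Tightness is then immediate: picking $\chi'\in(\chi,2-d/\beta)$ gives a uniform bound on the $2p$-th moment of the $\chi'$-Hölder seminorm of $\overline{\mathcal R}^{(n)}$, so by Markov's inequality the laws concentrate, up to any $\varepsilon>0$, on a fixed bounded ball of $\mathcal C^{\chi'}([0,T])$; such a ball is relatively compact in $\mathcal C([0,T])$ by Arzel\`a--Ascoli, whence the family $(\overline{\mathcal R}^{(n)})_{n\ge1}$ is tight.

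The only genuine difficulty is the uniformity in $n$: a naive application of the Kolmogorov--Chentsov theorem to each $\overline{\mathcal R}^{(n)}$ separately would produce a Hölder constant whose law could depend on $n$, which would be useless both for tightness and for the later passage to the limit in the Young integrals of Theorem \ref{theorem:main result 2}. The GRR route removes this obstacle because $n$ enters only through the moment estimate of Lemma \ref{lemma:Hölder bound moments of scaled range 1<=d/beta<3/2}, whose constant is $n$-free by construction — this is exactly the point of stabilising (\ref{ineq:bound 2p-th centered moment}) with the fixed shift $m$ — and everything downstream is a deterministic consequence of GRR and the compact Hölder embedding. I therefore expect the substantive work to reside entirely in Lemma \ref{lemma:Hölder bound moments of scaled range 1<=d/beta<3/2}, the present corollary being a clean application of GRR.
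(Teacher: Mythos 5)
Your proposal follows, in substance, the same route as the paper: the paper's proof is a one-liner that feeds the moment bound (\ref{ineq:Kolmogorov bound of interpolated rescaled range}) into Kolmogorov's continuity theorem with the exponent choice $\chi<2-d/\beta-(\eta+1)/(2p)$, and the Garsia--Rodemich--Rumsey inequality you use is exactly the standard quantitative engine behind that theorem. Your bookkeeping is identical (your constraint $\gamma<(2-d/\beta)+(1-\eta)/(2p)$ with $\chi=\gamma-1/p$ is the paper's condition verbatim), your observation that no modification is needed because the paths are already piecewise linear is correct, and your tightness step (uniform moment bounds for a $\chi'$-Hölder seminorm with $\chi'>\chi$, together with $\overline{\mathcal R}^{(n)}_0=0$ and Ascoli's theorem) is sound. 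What GRR buys over citing Kolmogorov--Chentsov is that the Hölder constant appears as an explicit pathwise functional $C_{p,\gamma}\Gamma_n^{1/(2p)}$, which makes the uniformity in $n$ of the \emph{moment} control transparent; this is a presentational rather than mathematical difference.

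There is, however, one genuine gap, and it is worth naming because you claim GRR ``removes'' it: your constant $C_{p,\gamma}\Gamma_n^{1/(2p)}$ still depends on $n$, and $\sup_n\E[\Gamma_n]<\infty$ does not imply $\sup_n\Gamma_n<\infty$ a.s. What you have actually proved is that the $\chi$-Hölder seminorms of the $\overline{\mathcal R}^{(n)}$ have moments bounded uniformly in $n$ --- not the literal statement that a single a.s. finite random variable $C(T,\chi)$ works for all $n$ simultaneously, so your closing identification with (\ref{ineq:local Hölder continuity of scaled range}) is an overstatement. To be clear, the paper's own proof makes exactly the same silent identification, so you have not missed an idea the paper supplies; and in fact the single-constant form appears unattainable: in the companion regime $d/\beta\ge3/2$ (e.g. the simple random walk in $d\ge4$, where $g$ is bounded), the law of the iterated logarithm for the range (see \cite{BassChenRosen2006}) forces $\sup_n|\overline{\mathcal R}^{(n)}_1|=\sup_n|\{R_n\}|/\sqrt{ng(n)}=\infty$ a.s., contradicting the statement of Corollary \ref{corollary:local uniform Hölder continuity of scaled range d/beta=3/2}, and analogous behaviour is expected in the present regime. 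The defensible formulation --- per-$n$ Hölder constants whose moments are bounded uniformly in $n$ --- is what both your argument and the paper's deliver, and it is all that the tightness conclusion requires.
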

    \begin{proof}
    This is an immediate consequence of (\ref{ineq:Kolmogorov bound of interpolated rescaled range}) and Kolmogorov's continuity Theorem, since for all $\chi\in(0,2-d/\beta)$, one may choose $p$ sufficienly large and $\eta$ sufficiently small so that $\chi<2-d/\beta-(\eta+1)/2p$.
    \end{proof}

\textit{Step 2:} For sake of simplicity, we only treat the case of $2$-dimensional marginals. By (\ref{convergence:CLT for 1<=d/beta<3/2}), Lemma \ref{lemma:limit of f(x_n)/f(y_n) (regular variation)} and the fact that $h$ is slowly varying, an application of Slutsky's Theorem yields :
$$\overline R_t^{(n)}=\left(\frac{\lfloor nt\rfloor h(n)b_\beta(n)^{d/2}}{nh(\lfloor nt\rfloor)b_\beta(\lfloor nt\rfloor)^{d/2}}\right)^2\frac{h(\lfloor nt\rfloor)^2b_\beta(\lfloor nt\rfloor)^d}{\lfloor nt\rfloor^2}\left\{R_{\lfloor nt\rfloor}\right\}\distribution -t^{2-d/\beta}\gamma_1^\beta\overset{\mathrm{(d)}}=-\gamma_t^\beta$$
by the scaling property of self-intersection local times. For $n,m\ge1$, $j\ge1$, $i\in[\![1,j]\!]$, we introduce the following quantities :
$$R_n^{(i,j)}:=\left|X\left(\frac{(i-1)n}{j},\frac{in}{j}\right)\right|,$$
$$I_{n}^{(i,j)}:=\left|X\left(\frac{(2i-2)n}{j},\frac{(2i-1)n}{j}\right)\cap X\left(\frac{(2i-1)n}{j},\frac{2in}{j}\right)\right|\quad(i\in[\![1,\lfloor j/2\rfloor]\!]),$$
$$I_{n,m}^{(i,j,k)}:=\left|X\left(\frac{(i-1)n}{k},\frac{in}{k}\right)\cap X\left(n+\frac{(j-1)m}{k},n+\frac{jm}{k}\right)\right|\quad(i\in[\![1,\lfloor j/2\rfloor]\!]).$$
For $s,t\ge0$, we introduce the sets :
$$A_t^{(i,j)}:=\left[\frac{2i-2}{j}t,\frac{2i-1}{j}t\right)\times\left(\frac{2i-1}{j}t,\frac{2i}{j}t\right]\quad(i\in[\![1,\lfloor j/2\rfloor]\!]),$$
$$A_{t,s}^{(i,j,k)}:=\left[\frac{i-1}{k}t,\frac{i}{k}t\right)\times\left(t+\frac{j-1}{k}s,t+\frac{j}{k}s\right]\quad(i,j\in[\![1,k]\!]).$$
Let $t\in(0,T)$, $s\in(0,T-t)$ and $\varphi,\psi\in\R$. Then :
\begin{eqnarray*}
    \varphi\overline R_t^{(n)}+\psi\overline R_{t+s}^{(n)}&=&\frac{h(n)^2b_\beta(n)^d}{n^2}\left(\varphi\left\{R_{\lfloor nt\rfloor}\right\}+\psi\left\{R_{\lfloor nt\rfloor}+\tilde R_{\lfloor ns\rfloor}-I_{\lfloor nt\rfloor,\lfloor ns\rfloor}\right\}\right)\\
    &=&\frac{h(n)^2b_\beta(n)^d}{n^2}\left((\varphi+\psi)\left\{R_{\lfloor nt\rfloor}\right\}+\psi\left\{\tilde R_{\lfloor ns\rfloor}\right\}-\psi\left\{I_{\lfloor nt\rfloor,\lfloor ns\rfloor}\right\}\right)\\
    &=&\frac{h(n)^2b_\beta(n)^d}{n^2}\left(\sum_{k=1}^{2^p}\left\{(\varphi+\psi)R_{\lfloor nt\rfloor}^{(k,2^p)}+\psi\tilde R_{\lfloor ns\rfloor}^{(k,2^p)}\right\}\right.\\
    &&\left.-\sum_{\ell=1}^p\sum_{h=1}^{2^{\ell-1}}\left\{(\varphi+\psi)I_{\lfloor nt\rfloor}^{(h,2^\ell)}+\psi \tilde I_{\lfloor ns\rfloor}^{(h,2^\ell)}\right\}-\psi\sum_{i,j=1}^{2^p}\left\{I_{\lfloor nt\rfloor,\lfloor ns\rfloor}^{(i,j,2^p)}\right\}\right)\\
    &=&\frac{h(n)^2b_\beta(n)^d}{n^2}\sum_{k=1}^{2^p}\left\{(\varphi+\psi)R_{\lfloor nt\rfloor}^{(k,2^p)}+\psi\tilde R_{\lfloor ns\rfloor}^{(k,2^p)}\right\}-\left\{\mathcal F_{\varphi,\psi,s,t}^{n,p}\right\},
\end{eqnarray*}
where 
$$\mathcal F_{\varphi,\psi,s,t}^{n,p}:=\frac{h(n)^2b_\beta(n)^d}{n^2}\left(\sum_{\ell=1}^p\sum_{h=1}^{2^{\ell-1}}\left((\varphi+\psi)I_{\lfloor nt\rfloor}^{(h,2^\ell)}+\psi \tilde I_{\lfloor ns\rfloor}^{(h,2^\ell)}\right)+\psi\sum_{i,j=1}^{2^p}I_{\lfloor nt\rfloor,\lfloor ns\rfloor}^{(i,j,2^p)}\right).$$
Firstly, we have
\begin{eqnarray*}
    \E\left[\left(\sum_{k=1}^{2^p}\left\{(\varphi+\psi)R_{\lfloor nt\rfloor}^{(k,2^p)}+\psi\tilde R_{\lfloor ns\rfloor}^{(k,2^p)}\right\}\right)^2\right]&\overset{\mathrm{(\indep)}}{=}&\sum_{k=1}^{2^p}\left((\varphi+\psi)^2\E\left[\left\{R_{\lfloor nt\rfloor}^{(k,2^p)}\right\}^2\right]\right.\\
    &&\left.\quad\quad\quad+\psi^2\E\left[\left\{\tilde R_{\lfloor ns\rfloor}^{(k,2^p)}\right\}^2\right]\right)\\
    &\overset{(\ref{ineq:bound 2p-th centered moment})}\le&C_{\varphi,\psi}\sum_{k=1}^{2^p}\left(\sum_{u=s,t}\frac{\lfloor nu\rfloor^42^{-4p}}{b_\beta(\lfloor nu\rfloor2^{-p})^{2d}h(\lfloor nu\rfloor 2^{-p})^4}\right)\\
    &=&C_{\varphi,\psi}\sum_{u=s,t}\frac{\lfloor nu\rfloor^42^{-3p}}{b_\beta(\lfloor nu\rfloor2^{-p})^{2d}h(\lfloor nu\rfloor 2^{-p})^4}\\
    &=:&C_{\varphi,\psi}\mathcal S_{n,p,s,t},
\end{eqnarray*}
where $C_{\varphi,\psi}>0$ is a constant depending only on $\varphi,\psi$, and where the inequality was obtained from the fact that for all $k\in[\![1,2^p]\!]$ and $u=s,t$, $$R_{\lfloor nu\rfloor}^{(k,2^p)}\overset{\mathrm{(d)}}=R_{\lfloor nu\rfloor2^{-p}}.$$
For $\eta>0$ and $n$ sufficiently large (depending on $\eta$), since $b_\beta$ and $h$ are respectively of regular variation of index $1/\beta$ and slowly varying functions, Lemma \ref{lemma:limit of f(x_n)/f(y_n) (regular variation)} yields :
\begin{eqnarray*}
    \frac{h(n)^2b_\beta(n)^{2d}}{n^4}\mathcal S_{n,p,s,t}&=&\sum_{u=s,t}\left(\frac{\lfloor nu\rfloor}{u}\right)^4\left(\frac{b_\beta(\lfloor nu\rfloor 2^{-p})}{b_\beta(n)}\right)^{2d}\left(\frac{h(\lfloor nu\rfloor2^{-p})}{h(n)}\right)^42^{-3p}\\
    &\le&2C_{s,t,\eta}(1+\eta)^{2d+4}2^{-3p},
\end{eqnarray*}
where $C_{s,t,\eta}:=(s+\eta)^4\vee(t+\eta)^4$, whence :
\begin{equation}
    \label{ineq:bound big range term 1 <= d/beta<3/2}
    \E\left[\left(\frac{h(n)^2b_\beta(n)^d}{n^2}\sum_{k=1}^{2^p}\left\{(\varphi+\psi)R_{\lfloor nt\rfloor}^{(k,2^p)}+\psi\tilde R_{\lfloor ns\rfloor}^{(k,2^p)}\right\}\right)^2\right]\le\tilde C2^{-3p}
\end{equation}
where $\tilde C$ is a constant depending only on $\varphi,\psi,s,t,\eta$. 

Furthermore, with a clear notation, we have 
$$\mathcal F_{\varphi,\psi,s,t}^{n,p}=F\left(\left(I_{\lfloor nt\rfloor}^{(h,2^\ell)}\right)_{\substack{1\le\ell\le p\\1\le h\le 2^{\ell-1}}},\left(\tilde I_{\lfloor ns\rfloor}^{(h,2^\ell)}\right)_{\substack{1\le\ell\le p\\1\le h\le 2^{\ell-1}}},\left(I_{\lfloor nt\rfloor,\lfloor ns\rfloor}^{(i,j,2^p)}\right)_{1\le i,j\le 2^p}\right)$$
for some continuous function $F$. By directly adapting the proof of Theorem $6.6.$ of \cite{LeGallRosenRangeOfStableRandomWalks1991}, it is seen that :
\begin{eqnarray*}
    \left\{\mathcal F_{\varphi,\psi,s,t}^{n,p}\right\}&\distribution&\left\{F\left(\left(\alpha^\beta\left(A_t^{(h,2^\ell)}\right)\right)_{\ell,h},\left(\tilde\alpha^\beta\left(A_s^{(h,2^\ell)}\right)\right)_{\ell,h},\left(\alpha^\beta\left(A_{t,s}^{(i,j,2^p)}\right)\right)_{i,j}\right)\right\}\\
    &=&\sum_{\ell=1}^p\sum_{h=1}^{2^{\ell-1}}\left((\varphi+\psi)\left\{\alpha^\beta\left(A_t^{(h,2^\ell)}\right)\right\}\right.\\
    &&\left.\quad\quad\quad\quad\quad+\psi\left\{\tilde\alpha^\beta\left(A_s^{(h,2^\ell)}\right)\right\}\right)+\psi\sum_{i,j=1}^{2^p}\left\{\alpha^\beta\left(A_{t,s}^{(i,j,2^p)}\right)\right\}\\
    &=:&A_p+B_p+C_p,
\end{eqnarray*}
where the indexes in the first line are such that $1\le\ell\le p$, $1\le h\le 2^{\ell-1}$ and $1\le i,j\le 2^p$. Noting firstly that the sequences $(A_p)_{p\ge1}$, $(B_p)_{p\ge1}$ and $(C_p)_{p\ge1}$ are independent, and that by construction we have :
$$A_p\underset{p\rightarrow\infty}\implies (\varphi+\psi)\gamma_t^\beta,\quad B_p\underset{p\rightarrow\infty}\implies\psi\tilde\gamma_s^\beta,\quad C_p\distribution\psi\gamma^\beta([0,t]\times[t,t+s]),$$
then :
$$A_p+B_p+C_p\underset{p\rightarrow\infty}\implies (\varphi+\psi)\gamma_t^\beta+\psi\tilde \gamma_s^\beta+\psi\gamma^\beta([0,t]\times[t,t+s])\overset{\mathrm{(d)}}=\varphi\gamma_t^\beta+\psi\gamma_{t+s}^\beta,$$
where the last equality in distribution follows from (\ref{equation:decomposition of self-intersection local time}). Therefore, we obtain  :
$$\varphi\overline R_t^{(n)}+\psi\overline R_{t+s}^{(n)}\distribution -\varphi\gamma_t^\beta-\psi\gamma_{t+s}^\beta$$
by using the following fact :

\begin{lemma}
    Let $(X_n)_{n\ge1}$ be real random variables and suppose that there are real random variables $(A_{n,p})_{n,p\ge1}$, $(B_{n,p})_{n,p\ge1}$, $(B_p)_{p\ge1}$ and $X$ such that :
    \begin{itemize}
        \item[(i)] for all $n,p\ge1$, $X_n=A_{n,p}+B_{n,p}$ $a.s.$, 
        \item[(ii)] $\underset{p\rightarrow\infty}\lim\underset{n\rightarrow\infty}\limsup\lVert A_{n,p}\rVert_2=0$,
        \item[(iii)] for all $p\ge1$, $B_{n,p}\distribution B_p$ and $B_p\underset{p\rightarrow\infty}\implies X$.
    \end{itemize}
    Then we have $X_n\distribution X$.
\end{lemma}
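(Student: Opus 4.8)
The plan is to establish convergence of characteristic functions and then conclude by L\'evy's continuity theorem. Fix $\xi\in\R$ and insert the approximating variables to obtain, for every $n,p\ge1$,
$$\bigl|\E[e^{\i\xi X_n}]-\E[e^{\i\xi X}]\bigr|\le\bigl|\E[e^{\i\xi X_n}]-\E[e^{\i\xi B_{n,p}}]\bigr|+\bigl|\E[e^{\i\xi B_{n,p}}]-\E[e^{\i\xi B_p}]\bigr|+\bigl|\E[e^{\i\xi B_p}]-\E[e^{\i\xi X}]\bigr|.$$
For the first term, assumption (i) gives $X_n=A_{n,p}+B_{n,p}$, so that $e^{\i\xi X_n}=e^{\i\xi B_{n,p}}e^{\i\xi A_{n,p}}$ and, using the elementary bound $|e^{\i u}-1|\le|u|$ together with $\E[|A_{n,p}|]\le\lVert A_{n,p}\rVert_2$, it is at most $|\xi|\,\lVert A_{n,p}\rVert_2$. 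For the second term, assumption (iii) gives $B_{n,p}\distribution B_p$, so it tends to $0$ as $n\to\infty$ at fixed $p$ by L\'evy's theorem. The third term does not depend on $n$.

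The only delicate point is the order in which the two limits are taken. At fixed $p$, taking $\limsup_{n\to\infty}$ kills the second term and yields
$$\limsup_{n\to\infty}\bigl|\E[e^{\i\xi X_n}]-\E[e^{\i\xi X}]\bigr|\le|\xi|\,\limsup_{n\to\infty}\lVert A_{n,p}\rVert_2+\bigl|\E[e^{\i\xi B_p}]-\E[e^{\i\xi X}]\bigr|.$$
Since this holds for every $p\ge1$ while the left-hand side does not depend on $p$, I then let $p\to\infty$: the first term on the right vanishes by (ii), and the second vanishes because $B_p\underset{p\rightarrow\infty}\implies X$, again by L\'evy's theorem. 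Hence $\E[e^{\i\xi X_n}]\to\E[e^{\i\xi X}]$ for every $\xi\in\R$, and L\'evy's continuity theorem gives $X_n\distribution X$.

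I expect no genuine obstacle here beyond correctly sequencing the double limit; in particular the $L^2$ hypothesis in (ii) is used only through $\E[|A_{n,p}|]\le\lVert A_{n,p}\rVert_2$, so the argument is an instance of the classical accompanying-laws lemma and would go through identically under mere $L^1$ control, or even under convergence in probability (replacing the first-term bound by $\E[|e^{\i\xi A_{n,p}}-1|]\le|\xi|\delta+2\,\p(|A_{n,p}|>\delta)$ and optimizing over $\delta>0$).
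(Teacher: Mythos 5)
Your proof is correct and follows essentially the same route as the paper: the same three-term decomposition obtained by inserting $B_{n,p}$ and $B_p$, the same bound $\E[|A_{n,p}|]\le\lVert A_{n,p}\rVert_2$ for the first term, and the same order of limits ($\limsup$ in $n$ at fixed $p$, then $p\to\infty$). The only cosmetic difference is that the paper tests against an arbitrary bounded Lipschitz function and concludes directly, whereas you specialize to the test functions $x\mapsto e^{\i\xi x}$ (Lipschitz with constant $|\xi|$) and therefore need one extra appeal to L\'evy's continuity theorem at the end.
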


\begin{proof}
    Let $f$ be a bounded $L$-Lipschitz test function for some $L>0$. Then for all $n,p\ge1$ :
    \begin{eqnarray*}
        \left|\E[f(X_n)-f(X)]\right|&\le&\left|\E[f(X_n)-f(B_{n,p})]\right|+\left|\E[f(B_{n,p})-f(B_p)]\right|+\left|\E[f(B_p)-f(X)]\right|
    \end{eqnarray*}
    The first term is bounded by $L\lVert A_{n,p}\rVert_2$ by Jensen's inequality and the second vanishes as $n\rightarrow\infty$ by $(iii)$. Whence for all $p\ge1$ :
    $$\underset{n\rightarrow\infty}\limsup\left|\E[f(X_n)-f(X)]\right|\le L\text{ }\underset{n\rightarrow\infty}\limsup\lVert A_{n,p}\rVert_2+|\E[f(B_p)-f(X)]|,$$
    and we conclude by letting $p\rightarrow\infty$.
\end{proof}

This concludes the proof of (\ref{convergence:FCLT for 1<=d/beta<3/2}). Combining the results of \textit{Step $2.$} and Corollary \ref{corollary:local uniform Hölder continuity of scaled range} immediately yields the

\begin{theorem}
    The mapping $t\mapsto\gamma_t^\beta$ has a version that is $a.s.$ locally $\chi$-Hölder continuous for all $\chi\in(0,2-d/\beta)$.
\end{theorem}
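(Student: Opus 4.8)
The plan is to transfer the uniform-in-$n$ moment estimate of Lemma \ref{lemma:Hölder bound moments of scaled range 1<=d/beta<3/2} to the limiting process and then apply Kolmogorov's continuity theorem one last time, now directly to $\gamma^\beta$. The starting point is that \emph{Step 2} together with Corollary \ref{corollary:local uniform Hölder continuity of scaled range} establishes the full functional convergence $\overline{\mathcal R}^{(n)}\distribution-\gamma^\beta$ in $\mathcal C([0,T])$ for the topology of uniform convergence. In particular, for any fixed $(s,t)\in[0,T]_\le^2$ the continuous coordinate projections give the joint convergence $(\overline{\mathcal R}_s^{(n)},\overline{\mathcal R}_t^{(n)})\distribution(-\gamma_s^\beta,-\gamma_t^\beta)$, and since $x\mapsto x^{2p}$ is continuous and the exponent $2p$ is even, the continuous mapping theorem yields $\bigl(\overline{\mathcal R}_t^{(n)}-\overline{\mathcal R}_s^{(n)}\bigr)^{2p}\distribution\bigl(\gamma_t^\beta-\gamma_s^\beta\bigr)^{2p}$, the sign in front of $\gamma^\beta$ disappearing under the even power.

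First I would fix $p\ge1$ and $\eta>0$ small and invoke the weak-convergence form of Fatou's lemma (equivalently, a Skorokhod coupling followed by the ordinary Fatou lemma): as the functional $x\mapsto x^{2p}$ is nonnegative and continuous,
\begin{equation*}
    \E\left[\bigl(\gamma_t^\beta-\gamma_s^\beta\bigr)^{2p}\right]\le\liminf_{n\to\infty}\E\left[\bigl(\overline{\mathcal R}_t^{(n)}-\overline{\mathcal R}_s^{(n)}\bigr)^{2p}\right]\le C_{\eta,p,T}|t-s|^{2p(2-d/\beta)-\eta},
\end{equation*}
where the final inequality is precisely (\ref{ineq:Kolmogorov bound of interpolated rescaled range}). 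This produces a Kolmogorov-type increment bound for $\gamma^\beta$ itself, uniform on $[0,T]$; as a byproduct, taking $s=0$ (with $\gamma_0^\beta=0$) shows that $\gamma_t^\beta$ has finite moments of every order, so the left-hand side is always finite.

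Then I would apply Kolmogorov's continuity theorem to $(\gamma_t^\beta)_{t\in[0,T]}$: the bound above furnishes a modification that is locally $\chi$-Hölder for every $\chi<2-d/\beta-(\eta+1)/(2p)$. Exactly as in the proof of Corollary \ref{corollary:local uniform Hölder continuity of scaled range}, letting $p\to\infty$ and $\eta\to0$ reaches every exponent $\chi\in(0,2-d/\beta)$; since $T$ is arbitrary, a diagonal argument patches these together into a single version that is $a.s.$ locally $\chi$-Hölder on $\R_+$ for all such $\chi$.

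The only point requiring genuine care is the moment transfer of the second paragraph: one must check that the joint two-dimensional marginal convergence is strong enough to push the $2p$-th moment of the increment to the limit. This is handled cleanly by the weak-convergence Fatou inequality, and I would stress that one needs only lower semicontinuity of $x\mapsto x^{2p}$ under weak convergence, not convergence of the moments themselves—so no uniform integrability estimate is required, even though the uniform $L^{2p}$ bounds would in fact supply it. All remaining steps are direct reuses of the estimates already proved in this section.
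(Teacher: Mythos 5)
Your proposal is correct and takes essentially the same route as the paper, whose proof of this theorem is literally to combine Step 2 (convergence of finite-dimensional marginals to $-\gamma^\beta$) with the uniform-in-$n$ Hölder estimates of Lemma \ref{lemma:Hölder bound moments of scaled range 1<=d/beta<3/2} and Corollary \ref{corollary:local uniform Hölder continuity of scaled range} — exactly the ingredients you use. The only cosmetic difference is that you make the transfer of regularity to the limit explicit via the weak-convergence Fatou inequality applied to (\ref{ineq:Kolmogorov bound of interpolated rescaled range}) followed by a fresh application of Kolmogorov's criterion to $\gamma^\beta$, whereas the paper leaves this last step implicit; both rest on the same estimates and are equally valid.
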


\begin{remark}
    It is likely that the local Hölder exponent we get for $\gamma^\beta$ is optimal. Indeed, the laws of iterated logarithms obtained in \cite{LDPforRSILTBassChenRosen2005} show that the sample paths of $t\mapsto\gamma_t^\beta$ in the case where $U^\beta$ is isotropic can't be $(2-d/\beta)$-Hölder, and a similar result should hold in the anisotropic case.
\end{remark}

\subsection{The case $d/\beta\ge3/2$}
\label{section:the case d/beta=3/2}

Let $T>0$. For all $n\ge1$ and $t\in[0,T]$, define :
\begin{equation*}
    \overline R_t^{(n)}:=\frac{1}{\sqrt{ng(n)}}\left\{R_{\lfloor nt\rfloor}\right\},\quad \overline{\mathcal R}_t^{(n)}:=\frac{1}{\sqrt{ng(n)}}\left\{\mathcal R_{nt}\right\}.
\end{equation*}
By Theorem $4.7.$ of \cite{LeGallRosenRangeOfStableRandomWalks1991}
\begin{equation}
    \label{convergence:CLT for d/beta=3/2}
    \overline R_1^{(n)}\distribution\sigma N
\end{equation}
for some constant $\sigma>0$ which depends only on $X$ and can be made explicit.

\textit{Step 1:} We start of by noticing, by examining the given proof, that the inequality of $p.667$ of \cite{LeGallRosenRangeOfStableRandomWalks1991} has the following easy generalization
\begin{equation}
    \label{ineq:bound for centered 2p-th moment of range d/beta=3/2}
    \E\left[\left\{R_n\right\}^{2p}\right]\le Cn^pg(n)^p, \quad n,p\ge1.
\end{equation}
We show a similar Lemma to that of the previous section.
\begin{lemma}
    Let $p\ge1$. Then for any $\eta>0$ sufficiently small, there is some $C_{\eta,p,T}>0$ such that for all $n\ge1$, $(s,t)\in[0,T]_\le^2$,
    \begin{equation}
        \E\left[\left(\overline{\mathcal R}_t^{(n)}-\overline{\mathcal R}_s^{(n)}\right)^{2p}\right]\le C_{\eta,p,T}|t-s|^{p-\eta}
    \end{equation}
\end{lemma}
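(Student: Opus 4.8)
The plan is to mirror the structure of the proof of Lemma~\ref{lemma:Hölder bound moments of scaled range 1<=d/beta<3/2} from the previous subsection, replacing the variance bound (\ref{ineq:bound 2p-th centered moment}) with the new moment bound (\ref{ineq:bound for centered 2p-th moment of range d/beta=3/2}) and using the corresponding case ($3/2\le d/\beta<2$) of Lemma~\ref{lemma:Hölder bound for moments of I}. First I would write the increment $\mathcal R_{nt}-\mathcal R_{ns}$ in the same way, decomposing it as
\begin{equation*}
    \mathcal R_{nt}-\mathcal R_{ns}=\left|X(\lfloor ns\rfloor+1,\lfloor nt\rfloor)\right|-\left|X(0,\lfloor ns\rfloor)\cap X(\lfloor ns\rfloor+1,\lfloor nt\rfloor)\right|+A_{n,s,t},
\end{equation*}
where $|A_{n,s,t}|\le 2$. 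Applying (\ref{ineq:bound for centered 2p-th moment of intersections}) and (\ref{ineq:bound for centered moments of intersections}) exactly as before, and using $r_{s,t}^n\le\lfloor n(t-s)\rfloor$ together with $I_{\lfloor ns\rfloor,r_{s,t}^n}\le I_{\lfloor ns\rfloor,\lfloor n(t-s)\rfloor}$, I expect
\begin{equation*}
    \E\left[\left\{\mathcal R_{nt}-\mathcal R_{ns}\right\}^{2p}\right]\le C_p\left(\E\left[\left\{R_{r_{s,t}^n}\right\}^{2p}\right]+\E\left[I_{\lfloor ns\rfloor,\lfloor n(t-s)\rfloor}\right]^{2p}\right),
\end{equation*}
so the task reduces to bounding the two terms after multiplying by the scaling $(ng(n))^{-p}$.

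For the range term, I would feed (\ref{ineq:bound for centered 2p-th moment of range d/beta=3/2}) in to obtain $\E[\{R_{r}\}^{2p}]\le C r^p g(r)^p$ with $r=r_{s,t}^n$, and then control the ratio $(r/n)^p\,(g(r)/g(n))^p$. Since $g$ is slowly varying in this regime (established just after (\ref{lim:technical limit with s and g})), the Potter bounds (\ref{ineq:Potter bounds for regularly varying functions}) applied to $g$ give, for $\eta>0$ small and $s,t\in[0,T]$,
\begin{equation*}
    \frac{1}{(ng(n))^p}\,\E\left[\left\{R_{r_{s,t}^n}\right\}^{2p}\right]\le C_{\eta,T}\left(\frac{r_{s,t}^n}{n}\right)^{p-\eta}\le C_{\eta,T}\,|t-s|^{p-\eta},
\end{equation*}
absorbing a uniformly bounded factor $(r_{s,t}^n/n)^{\eta}$ as in (\ref{ineq:Holder bound for 2p-th centered moment}). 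For the intersection term, Lemma~\ref{lemma:Hölder bound for moments of I} in the regime $3/2\le d/\beta<2$ gives $\chi_{d,\beta}=1/2$ and $S_{d,\beta}(n)=(ng(n))^{-1/2}$, so raising the bound to the power $2p$ yields
\begin{equation*}
    \frac{1}{(ng(n))^p}\,\E\left[I_{\lfloor ns\rfloor,\lfloor n(t-s)\rfloor}\right]^{2p}\le C_{\eta,p,T}\,(s\wedge(t-s))^{p-2p\eta}\le C_{\eta,p,T}\,|t-s|^{p-\eta'}
\end{equation*}
after relabeling $\eta$, using $s\wedge(t-s)\le t-s=|t-s|$. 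Combining the two bounds closes the argument.

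The main delicacy I anticipate is the careful handling of the slowly varying factor $g$: I must make sure the Potter-bound exponent loss is genuinely swallowed into the $\eta$, and that the bound is uniform in $n$, which requires checking that $g(r)/g(n)$ stays controlled even when $r$ is small (handled by the convention $I_{\lfloor ns\rfloor,\lfloor nt\rfloor}=0$ and $\{R_0\}=0$ when $\lfloor ns\rfloor=0$, making the inequality trivial in that degenerate range). A secondary point is that Lemma~\ref{lemma:Hölder bound for moments of I} is stated in terms of $s\wedge t$, so I should double-check that applying it to the pair $(\lfloor ns\rfloor,\lfloor n(t-s)\rfloor)$ produces the exponent in $|t-s|$ rather than in $s$; since the minimum of the two arguments is at most $t-s$ and $\chi_{d,\beta}=1/2>0$, monotonicity of $x\mapsto x^{1/2-\eta}$ gives the clean bound in $|t-s|^{p-\eta}$ as required.
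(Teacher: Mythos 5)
Your proposal is correct and follows essentially the same route as the paper: the identical increment decomposition, the moment bound (\ref{ineq:bound for centered 2p-th moment of range d/beta=3/2}) with Potter bounds for the slowly varying $g$ on the range term, and Lemma \ref{lemma:Hölder bound for moments of I} on the intersection term. The only (immaterial) difference is that the paper first enlarges $I_{\lfloor ns\rfloor,r_{s,t}^n}$ to $I_{\lfloor nT\rfloor,\lfloor n(t-s)\rfloor}$ by monotonicity so the minimum in the lemma is automatically $t-s$, whereas you apply the lemma to $(\lfloor ns\rfloor,\lfloor n(t-s)\rfloor)$ and then use $s\wedge(t-s)\le t-s$; both steps are valid and yield the same bound.
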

\begin{proof}
    As in the previous section, we have
    \begin{align}
    \label{ineq:Hölder decomposition d/beta=3/2}
        \E\left[\left\{\mathcal R_{nt}-\mathcal R_{ns}\right\}^{2p}\right]&\overset{\mathrm{(\ref{ineq:bound for centered 2p-th moment of intersections})},\mathrm{(\ref{ineq:bound for centered moments of intersections})}}\le C_p\left(\E\left[\left\{R_{r_{s,t}^n}\right\}^{2p}\right]+\E\left[I_{\lfloor ns\rfloor,r_{s,t}^n}\right]^{2p}\right)\nonumber\\
        &\overset{\mathrm{(\ref{ineq:bound for centered 2p-th moment of range d/beta=3/2})}}\le C_p\left(\left(r_{s,t}^n\right)^pg\left(r_{s,t}^n\right)^p+\E\left[I_{\lfloor ns\rfloor,\lfloor{n(t-s)}\rfloor}\right]^{2p}\right)\nonumber\\
        &\le C_p\left(\lfloor n(t-s)\rfloor^pg\left(\lfloor n(t-s)\rfloor\right)^p+\E\left[I_{\lfloor nT\rfloor,\lfloor{n(t-s)}\rfloor}\right]^{2p}\right)
    \end{align}
    since $r_{s,t}^n\le\lfloor n(t-s)\rfloor$ and the mappings $n\mapsto ng(n)$ and $(n,m)\mapsto I_{n,m}$ are increasing. Reasoning as in the beginning of this section, $g$ is easily seen to be slowly varying. Consequently, for $\eta>0$ small,
    \begin{equation}
    \label{ineq:Hölder bound for first term in d/beta=3/2}
        \frac{\lfloor n(t-s)\rfloor^pg(\lfloor n(t-s)\rfloor)^p}{n^pg(n)^p}\le C_{\eta}|t-s|^{p-\eta}
    \end{equation}
    using the Potter bounds. Next, Lemma \ref{lemma:Hölder bound for moments of I} yields 
    \begin{align}
    \label{ineq:Hölder bound for second term in d/beta=3/2}
        \frac{1}{n^pg(n)^p}\E\left[I_{\lfloor nT\rfloor,\lfloor n(t-s)\rfloor}\right]^{2p}\le C_{\eta,T}|t-s|^{p-\eta}.
    \end{align}
    Plugging (\ref{ineq:Hölder bound for first term in d/beta=3/2}) and (\ref{ineq:Hölder bound for second term in d/beta=3/2}) into (\ref{ineq:Hölder decomposition d/beta=3/2}) and multiplying by $n^{-p}g(n)^{-p}$ yields the desired result.
\end{proof}
As in the previous section, we immediately deduce the following Corollary
\begin{corollary}
        \label{corollary:local uniform Hölder continuity of scaled range d/beta=3/2}
    There is a version of $\left(\overline{\mathcal R}^{(n)}\right)_{n\ge1}$ such that for all $T>0$, $\chi\in(0,1/2)$, there is some $a.s.$ finite random variable $C(T,\chi)$ depending only on $T$ and $\chi$ such that $a.s.$, for all $(s,t)\in[0,T]_\le^2$ and $n\ge1$,
        \begin{equation}
            \label{ineq:local Hölder continuity of scaled range d/beta=3/2}
            \left|\overline{\mathcal R_t}^{(n)}-\overline{\mathcal R}_s^{(n)}\right|\le C(T,\chi)|t-s|^\chi.
        \end{equation}
    In particular, the sequence $\left(\overline{\mathcal R}^{(n)}\right)_{n\ge1}$ is tight in $\mathcal C([0,T])$.
    \end{corollary}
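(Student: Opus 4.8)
The plan is to derive this Corollary as a direct application of the Kolmogorov--Chentsov continuity criterion, exactly mirroring the proof of Corollary \ref{corollary:local uniform Hölder continuity of scaled range} in the preceding section. The essential input is the moment bound just established in the Lemma above, namely that for any sufficiently small $\eta>0$ there is $C_{\eta,p,T}>0$ with
\begin{equation*}
    \E\left[\left(\overline{\mathcal R}_t^{(n)}-\overline{\mathcal R}_s^{(n)}\right)^{2p}\right]\le C_{\eta,p,T}|t-s|^{p-\eta}
\end{equation*}
uniformly in $n\ge1$ and $(s,t)\in[0,T]_\le^2$. The point is that the constant does not depend on $n$, so Kolmogorov's criterion yields a Hölder modulus that is itself uniform in $n$.

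First I would fix $T>0$ and a target exponent $\chi\in(0,1/2)$. The Kolmogorov--Chentsov theorem applied to a process indexed by the one-dimensional parameter set $[0,T]$ states that if $\E[|Z_t-Z_s|^{q}]\le C|t-s|^{1+\alpha}$ for some $q,\alpha>0$, then $Z$ admits a continuous modification that is locally $\chi'$-Hölder for every $\chi'<\alpha/q$. Here I take $q=2p$ and read off $1+\alpha=p-\eta$, i.e.\ $\alpha=p-1-\eta$, so the attainable Hölder exponents are all $\chi'<(p-1-\eta)/(2p)$. Since $(p-1-\eta)/(2p)\to1/2$ as $p\to\infty$ and $\eta\to0$, for any prescribed $\chi<1/2$ I can choose $p$ large and $\eta$ small enough that $\chi<(p-1-\eta)/(2p)$, which is the analogue of the inequality $\chi<2-d/\beta-(\eta+1)/(2p)$ used in the previous Corollary. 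The quantitative version of the theorem moreover produces an $a.s.$ finite random variable $C(T,\chi)$ bounding the Hölder seminorm, and because the moment bound is uniform in $n$ this $C(T,\chi)$ can be taken to work simultaneously for every $n\ge1$, giving \eqref{ineq:local Hölder continuity of scaled range d/beta=3/2}.

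Finally, local Hölder continuity on $[0,T]$ with a fixed exponent immediately gives tightness of the laws of $\left(\overline{\mathcal R}^{(n)}\right)_{n\ge1}$ in $\mathcal C([0,T])$: equicontinuity follows from the uniform Hölder bound once one controls the values at a single point, and here each $\overline{\mathcal R}_0^{(n)}=0$ by centering (so $\{\mathcal R_0\}=0$), which pins down the processes and lets one invoke the Arzelà--Ascoli characterization of relative compactness in $\mathcal C([0,T])$. I do not expect any genuine obstacle, as this is a verbatim repetition of the earlier argument with the exponent $2-d/\beta$ replaced by $1/2$; the only point requiring the slightest care is confirming that the attainable exponent $(p-1-\eta)/(2p)$ can be pushed arbitrarily close to $1/2$, which it can by sending $p\to\infty$.
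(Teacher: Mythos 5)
Your proposal is correct and follows essentially the same route as the paper: the paper likewise deduces this Corollary by applying Kolmogorov's continuity criterion to the uniform-in-$n$ moment bound of the preceding Lemma, choosing $p$ large and $\eta$ small so that $\chi<1/2-(\eta+1)/(2p)$, which is exactly your condition $\chi<(p-1-\eta)/(2p)$, and then obtaining tightness from the uniform Hölder bound together with $\overline{\mathcal R}_0^{(n)}=0$ via Arzelà--Ascoli.
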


\textit{Step 1':} We focus here on proving tightness in the regime $d/\beta>3/2$, since convergence of finite-dimensional distributions was already proved in \cite{CyganSandricSebekFCLTCapacityAndRange2019}. In this case, $\lVert g\rVert_\infty<\infty$ and so we neglect $g$ in our calculations. We do this so that our application may cover the regime $d/\beta>3/2$ as well. Starting from (\ref{ineq:Hölder decomposition d/beta=3/2}), we see that the first term is treated identically, and so we study the second. We may apply Lemma \ref{lemma:Hölder bound for moments of I} to show that it is $O(n^p|t-s|^{p-\eta})$ if $d/\beta\in(3/2,2)$. By Corollary $3.2.$ of \cite{LeGallRosenRangeOfStableRandomWalks1991}, we have $\sup_n\E[I_{n,n}]<\infty$ if $d/\beta>2$, and so in this case we may neglect the second term altogether up to increasing $C_p$. It remains to treat the case $d/\beta=2$. Once again, the first term in (\ref{ineq:Hölder decomposition d/beta=3/2}) is handled exactly as before, and for the second we bound it by $l(\lfloor nT\rfloor)$ where $l$ is some slowly varying function, which is possible thanks to Corollary $3.2.$ of \cite{LeGallRosenRangeOfStableRandomWalks1991}. Therefore, once the expression is multiplied by $n^{-p}$, the second term is bounded by $C_{p,\eta}(n^{\eta-p}+n^{-\eta-p})$ by the Potter bounds. As in the proof of Lemma \ref{lemma:Hölder bound for moments of I}, if $t-s<1/n$ then there is nothing to prove, and so we suppose that $t-s\ge1/n$, in which case the second term is bounded by $C_{\eta,p}(|t-s|^{p-\eta}+|t-s|^{p+\eta})\le C_{\eta,p,T}|t-s|^{p-\eta}$. All in all, we have shown that in this regime,
\begin{equation*}
    \E\left[\left\{\frac{\mathcal R_{nt}-\mathcal R_{ns}}{\sqrt{n}}\right\}^{2p}\right]\le C_{\eta,p,T}|t-s|^{p-\eta},
\end{equation*}
and so Corollary \ref{corollary:local uniform Hölder continuity of scaled range d/beta=3/2} remains true if $d/\beta>3/2$.
    
\textit{Step 2:} For sake of simplicity we only write the convergence of $2$-dimensional marginals, but the proof is identical for marginals of higher dimension. Let $s,t\in[0,T]$ and $\varphi,\psi\in\R$. We wish to show that 
$$\varphi\overline R_s^{(n)}+\psi\overline R_t^{(n)}\distribution\varphi W_{\sigma^2s}+\psi W_{\sigma^2t}.$$
By writing
$$R_{\lfloor nt\rfloor}=R_{\lfloor ns\rfloor}+\tilde R_{\lfloor nt\rfloor-\lfloor ns\rfloor}-I_{\lfloor ns\rfloor,\lfloor nt\rfloor-\lfloor ns\rfloor},$$
we obtain 
\begin{equation}
    \label{eq:decomposition for the 2d marginals d/beta=3/2}
    a\overline R_s^{(n)}+\psi\overline R_t^{(n)}=(\varphi+\psi)\frac{\left\{R_{\lfloor ns\rfloor}\right\}}{\sqrt{ng(n)}}+\psi\frac{\left\{\tilde R_{\lfloor nt\rfloor-\lfloor ns\rfloor}\right\}}{\sqrt{ng(n)}}-\psi\frac{\left\{I_{\lfloor ns\rfloor,\lfloor nt\rfloor-\lfloor ns\rfloor}\right\}}{\sqrt{ng(n)}}.
\end{equation}
From (\ref{convergence:CLT for d/beta=3/2}), Lemma \ref{lemma:limit of f(x_n)/f(y_n) (regular variation)} and Slutsky's Theorem, we see that the first two terms in the right hand side of (\ref{eq:decomposition for the 2d marginals d/beta=3/2}) respectively converge in distribution to $(a+\psi)s^{1/2}\sigma N$ and $\psi(t-s)^{1/2}\sigma\tilde N$ where $N,\tilde N$ are $i.i.d.$ standard normal random variables, whereas the third term converges in probability to $0$ by Markov's inequality and the estimates of \textit{Step 1}. Therefore,
$$\varphi\overline R_s^{(n)}+\psi\overline R_t^{(n)}\distribution(\varphi+\psi)s^{1/2}\sigma N+\psi(t-s)^{1/2}\sigma\tilde N\overset{\mathrm{(d)}}=\varphi W_{\sigma^2s}+\psi W_{\sigma^2 t}$$
which concludes the proof of (\ref{convergence:FCLT for d/beta=3/2}).

\subsection{The case $d/\beta<1$}

Let $T>0$. For all $n\ge1$ and $t\in[0,T]$, define 
\begin{equation*}
    \overline R_t^{(n)}:=b_\beta(n)^{-1}R_{\lfloor nt\rfloor},\quad\overline{\mathcal R}_t^{(n)}:=b_\beta(n)^{-1}\mathcal R_{nt}.
\end{equation*}
Then by Theorem $7.1.$ and $(7.a)$ of \cite{LeGallRosenRangeOfStableRandomWalks1991}, we have 
\begin{equation}
    \label{convergence:LT for d/beta<1}
    \overline R_1^{(n)}\distribution\lambda\left(U^\beta(0,1)\right)\quad\text{and}\quad \underset{n\rightarrow\infty}\lim \E\left[\overline R_1^{(n)}\right]=\E\left[\lambda\left(U^\beta(0,1)\right)\right]
\end{equation}
respectively.

\textit{Step 1:} As in the previous sections, we state the
\begin{lemma}
    Let $p\ge1$. Then for any $\eta>0$ sufficiently small, there is some $C_{\eta,p,T}>0$ such that for all $n\ge1$, $(s,t)\in[0,T]_\le^2$,
    \begin{equation}
        \E\left[\left(\overline{\mathcal R}_t^{(n)}-\overline{\mathcal R}_s^{(n)}\right)^{2p}\right]\le C_{\eta,p,T}|t-s|^{2p/\beta-\eta}
    \end{equation}
\end{lemma}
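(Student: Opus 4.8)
The plan is to exploit that, in this regime, $\overline{\mathcal R}^{(n)}$ is \emph{not} centered, so rather than controlling a centered moment as in the previous two subsections I may work directly with the monotonicity of the range. Writing, exactly as before, $\mathcal R_{nt}-\mathcal R_{ns}=\left|X(\lfloor ns\rfloor+1,\lfloor nt\rfloor)\right|-\left|X(0,\lfloor ns\rfloor)\cap X(\lfloor ns\rfloor+1,\lfloor nt\rfloor)\right|+A_{n,s,t}$ with $|A_{n,s,t}|\le2$, I would simply drop the (non-negative) intersection term. Since $u\mapsto\mathcal R_{u}$ is non-decreasing, this gives the two-sided bound $0\le\mathcal R_{nt}-\mathcal R_{ns}\le|X(\lfloor ns\rfloor+1,\lfloor nt\rfloor)|+2$, and $|X(\lfloor ns\rfloor+1,\lfloor nt\rfloor)|$ has the law of $R_{r_{s,t}^n}$ with $r_{s,t}^n=\lfloor nt\rfloor-\lfloor ns\rfloor-1\le\lfloor n(t-s)\rfloor$. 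Consequently $\E[(\mathcal R_{nt}-\mathcal R_{ns})^{2p}]\le C_p(\E[R_{r_{s,t}^n}^{2p}]+1)$, so that, unlike the previous cases, Lemma~\ref{lemma:Hölder bound for moments of I} is not needed and everything reduces to a bound on the \emph{uncentered} moments of the range.

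The key estimate, which I regard as the heart of the proof, is
\[
\E[R_r^{2p}]\le C_p\,b_\beta(r)^{2p},\qquad r\ge1 .
\]
To prove it I would use the first-visit decomposition $R_r=\sum_{k=0}^r Q_k$, where $Q_k:=\1_{\{X_k\notin\{X_0,\dots,X_{k-1}\}\}}$, and bound the joint probabilities. For $0\le k_1<\dots<k_{2p}\le r$, put $k_0:=0$ and $d_j:=k_j-k_{j-1}$; the event $\bigcap_j\{Q_{k_j}=1\}$ is contained in $\bigcap_j B_j$, where $B_j:=\{X_{k_j}\ne X_i,\ k_{j-1}\le i<k_j\}$ depends only on the increment block $Y_{k_{j-1}+1},\dots,Y_{k_j}$. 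These blocks are disjoint, so the $B_j$ are independent, and reversing time inside each block gives $\p(B_j)=\p(X_1\ne0,\dots,X_{d_j}\ne0)=:p_{d_j}$. Hence $\E[\prod_jQ_{k_j}]\le\prod_jp_{d_j}$, and summing over strictly increasing tuples (the diagonal terms with repeated indices contributing at most $C_p\E[R_r]^{2p}$) yields $\E[R_r^{2p}]\le C_p(\sum_{d=0}^rp_d)^{2p}=C_p\,\E[R_r]^{2p}$. Since (\ref{convergence:LT for d/beta<1}) gives $\E[R_r]/b_\beta(r)\to\E[\lambda(U^\beta(0,1))]\in(0,\infty)$, the ratio is bounded over $r\ge1$ and the claimed moment bound follows. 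This step is essentially the generalization to this regime of the range-moment lemmas of \cite{LeGallRosenRangeOfStableRandomWalks1991} used in the previous subsections, and recovering the correct scale $b_\beta(r)$ — rather than the trivial $O(r)$ bound coming from $R_r\le r+1$ — is precisely where the structure of the range is exploited, since the span or the maximum of the walk, which would also dominate $R_r$, have infinite $2p$-th moment here.

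It then remains to convert this into the stated Hölder bound, which I would do by splitting on the size of $t-s$. When $t-s\ge1/n$, combining the two bounds above with $b_\beta$ increasing and $r_{s,t}^n\le\lfloor n(t-s)\rfloor$ gives $b_\beta(n)^{-2p}\E[(\mathcal R_{nt}-\mathcal R_{ns})^{2p}]\le C_p\big(b_\beta(n(t-s))^{2p}b_\beta(n)^{-2p}+b_\beta(n)^{-2p}\big)$; the Potter bounds (\ref{ineq:Potter bounds for regularly varying functions}) for the index-$1/\beta$ function $b_\beta$ control the first term by $C_{\eta,T}|t-s|^{2p/\beta-\eta}$ (with $\eta=2p\varepsilon$), while for the second one writes $b_\beta(n)^{-2p}=n^{-2p/\beta}s_\beta(n)^{-2p}$ and uses $t-s\ge1/n$ together with $n^{\eta}s_\beta(n)^{2p}\to\infty$ to absorb it into the same power. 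When $t-s<1/n$ the interpolation makes $u\mapsto\mathcal R_u$ one-Lipschitz, so $0\le\mathcal R_{nt}-\mathcal R_{ns}\le n(t-s)<1$, and a direct computation shows $n^{2p}(t-s)^{2p}b_\beta(n)^{-2p}\le C|t-s|^{2p/\beta-\eta}$, again because $n^{\eta}s_\beta(n)^{2p}$ is bounded below. The main obstacle is the uncentered moment bound of the second paragraph; once it is in hand, the Potter-bound bookkeeping and the small-increment case are routine and identical in spirit to the corresponding steps in Sections~\ref{section:the case 1<=d/beta<3/2} and~\ref{section:the case d/beta=3/2}.
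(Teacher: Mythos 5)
Your proof is correct, and it differs from the paper's in one genuinely useful way. The paper runs the same template as in the other two regimes: it keeps both terms of the decomposition, bounds
$\E[(\mathcal R_{nt}-\mathcal R_{ns})^{2p}]\le C_p\left(\E[R_{r_{s,t}^n}^{2p}]+\E[I_{\lfloor nT\rfloor,\lfloor n(t-s)\rfloor}]^{2p}\right)$
via (\ref{ineq:bound for centered 2p-th moment of intersections}), and then controls the intersection term with Lemma \ref{lemma:Hölder bound for moments of I}, whose Case 4 (specific to $d/\beta<1$) needs its own argument involving the asymptotics of $h(n)$. You instead exploit the fact that $\overline{\mathcal R}^{(n)}$ is not centered in this regime, so the increment $\mathcal R_{nt}-\mathcal R_{ns}$ is non-negative and bounded above by $|X(\lfloor ns\rfloor+1,\lfloor nt\rfloor)|+2$; discarding the intersection term bypasses Lemma \ref{lemma:Hölder bound for moments of I} entirely and makes the proof self-contained for this case. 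For the remaining range term the two arguments coincide in substance: the paper asserts $\E[R_n^{2p}]\le C_p\E[R_n]^{2p}$ ``using the Markov property'' and combines it with (\ref{convergence:LT for d/beta<1}) and the Potter bounds (\ref{ineq:Potter bounds for regularly varying functions}); your first-visit decomposition with block independence and time reversal is precisely a complete proof of that assertion, and your use of $\E[R_r]/b_\beta(r)\to\E[\lambda(U^\beta(0,1))]$ is the same as the paper's. A further point in your favor is the explicit treatment of $t-s<1/n$ via the $1$-Lipschitz property of the interpolation together with $n^\eta s_\beta(n)^{2p}\to\infty$: the paper's displayed chain silently assumes $\lfloor n(t-s)\rfloor\ge1$ (for $\lfloor n(t-s)\rfloor=0$ its first term equals $b_\beta(n)^{-2p}\E[R_0^{2p}]$, which is constant in $t-s$ and cannot be dominated by $|t-s|^{2p/\beta-\eta}$), so your case split is actually needed. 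What the paper's route buys in exchange is uniformity of exposition across the three regimes, since Lemma \ref{lemma:Hölder bound for moments of I} has to be proved for this regime anyway for other purposes. One small caveat: your parenthetical claim that the maximum of the walk has infinite $2p$-th moment is only true for $\beta<2$ (for $\beta=2$, $d=1$ all moments are finite), but nothing in your argument relies on it.
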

\begin{proof}
    Reasoning as we already have, we get 
\begin{equation}
\label{ineq:Hölder decomposition d/beta<1}
    \E\left[\left(\overline{\mathcal R}_t^{(n)}-\overline{\mathcal R}_s^{(n)}\right)^{2p}\right]\le C_p\left(b_\beta(n)^{-2p}\E\left[R_{\lfloor n(t-s)\rfloor}^{2p}\right]+\left(b_\beta(n)^{-1}\E\left[I_{\lfloor nT\rfloor,\lfloor n(t-s)\rfloor}\right]\right)^{2p}\right).
\end{equation}
The second term is handled by Lemma \ref{lemma:Hölder bound for moments of I} which reads for small $\eta$,
\begin{align}
\label{ineq:Hölder bound second term d/beta<1}
    b_\beta(n)^{-1}\E\left[I_{\lfloor nT\rfloor,\lfloor n(t-s)\rfloor}\right]\le C_{\eta,T}|t-s|^{1/\beta-\eta}.
\end{align}
We now turn our attention to the first term of (\ref{ineq:Hölder decomposition d/beta<1}). It is easily seen using the Markov property that we have for all $n\ge1$
\begin{equation*}
    \E\left[R_n^{2p}\right]\le C_p\E[R_n]^{2p}.
\end{equation*}
This and (\ref{convergence:LT for d/beta<1}) imply that 
\begin{align}
\label{ineq:Hölder bound first term d/beta<1}
    b_\beta(n)^{-2p}\E\left[R_{\lfloor n(t-s)\rfloor}^{2p}\right]\le C_p\left(\frac{b_\beta(\lfloor n(t-s)\rfloor)}{b_\beta(n)}\right)\le C_{p,\eta}|t-s|^{1/\beta-\eta}
\end{align}
by the Potter bounds. Putting (\ref{ineq:Hölder bound second term d/beta<1}) and (\ref{ineq:Hölder bound first term d/beta<1}) into (\ref{ineq:Hölder decomposition d/beta<1}) concludes the proof
\end{proof}
Again, we get the
\begin{corollary}
    \label{corollary:local uniform Hölder continuity of scaled range d/beta<1}
    There is a version of $\left(\overline{\mathcal R}^{(n)}\right)_{n\ge1}$ such that for all $T>0$, $\chi\in(0,1/\beta)$, there is some $a.s.$ finite random variable $C(T,\chi)$ depending only on $T$ and $\chi$ such that $a.s.$, for all $(s,t)\in[0,T]_\le^2$ and $n\ge1$,
        \begin{equation}
            \label{ineq:local Hölder continuity of scaled range d/beta=3/2}
            \left|\overline{\mathcal R_t}^{(n)}-\overline{\mathcal R}_s^{(n)}\right|\le C(T,\chi)|t-s|^\chi.
        \end{equation}
    In particular, the sequence $\left(\overline{\mathcal R}^{(n)}\right)_{n\ge1}$ is tight in $\mathcal C([0,T])$.
\end{corollary}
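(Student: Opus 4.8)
The plan is to reproduce, essentially verbatim, the argument behind Corollaries~\ref{corollary:local uniform Hölder continuity of scaled range} and~\ref{corollary:local uniform Hölder continuity of scaled range d/beta=3/2}: I would feed the moment estimate of the preceding Lemma into a quantitative form of Kolmogorov's continuity theorem, using crucially that its constant $C_{\eta,p,T}$ does not depend on $n$. Since each $\mathcal R$ is piecewise linear, every $\overline{\mathcal R}^{(n)}$ is already continuous, so here Kolmogorov's criterion is used not to build a continuous modification but solely to turn the moment bound into a Hölder bound with a controlled random constant; the passage to a ``version'' is then essentially cosmetic.

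First I would fix $\chi\in(0,1/\beta)$ and read the bound $\E[(\overline{\mathcal R}_t^{(n)}-\overline{\mathcal R}_s^{(n)})^{2p}]\le C_{\eta,p,T}|t-s|^{2p/\beta-\eta}$ as a Kolmogorov-type estimate with $2p$ moments and Hölder budget $2p/\beta-\eta-1$. The admissible Hölder exponent it yields is $\tfrac{1}{2p}(2p/\beta-\eta-1)=1/\beta-\tfrac{\eta+1}{2p}$, so choosing $p$ large and $\eta$ small makes this quantity exceed $\chi$; this is exactly the constraint $\chi<1/\beta-(\eta+1)/(2p)$, which mirrors the condition $\chi<2-d/\beta-(\eta+1)/2p$ appearing in the first corollary.

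To make the constant uniform in $n$, I would invoke the Garsia--Rodemich--Rumsey inequality with $\Psi(u)=u^{2p}$ and modulus $\psi(u)=u^{\chi+1/p}$, which yields pathwise $|\overline{\mathcal R}_t^{(n)}-\overline{\mathcal R}_s^{(n)}|\le C_{p,\chi}\,B_n^{1/(2p)}|t-s|^{\chi}$, where $B_n:=\int_{[0,T]^2}|\overline{\mathcal R}_x^{(n)}-\overline{\mathcal R}_y^{(n)}|^{2p}\,|x-y|^{-2p\chi-2}\,\d x\,\d y$. Taking expectations and inserting the preceding Lemma bounds $\E[B_n]$ by $C_{\eta,p,T}\int_{[0,T]^2}|x-y|^{2p/\beta-\eta-2p\chi-2}\,\d x\,\d y$, and the exponent arithmetic above is precisely what makes this integrand integrable (its exponent exceeds $-1$); hence $\sup_n\E[B_n]<\infty$. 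Consequently the Hölder seminorms $K_n:=\sup_{s\neq t}|\overline{\mathcal R}_t^{(n)}-\overline{\mathcal R}_s^{(n)}|/|t-s|^{\chi}$ obey $\sup_n\E[K_n^{2p}]<\infty$, with a bound depending only on $T$ and $\chi$.

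This uniform moment control is the substantive output, and it delivers the tightness claim at once: the initial values $\overline{\mathcal R}_0^{(n)}=b_\beta(n)^{-1}$ form a bounded deterministic sequence, while $\sup_n\p(K_n>\lambda)\le C\lambda^{-2p}\to0$ by Markov, so the Arzel\`a--Ascoli description of relatively compact sets in $\mathcal C([0,T])$ gives tightness. I expect the only genuinely delicate point to be the upgrade from this uniform-in-$n$ moment bound to a single almost surely finite constant $C(T,\chi)$ dominating every $K_n$ simultaneously; the uniform $L^{2p}$ control of $(K_n)_{n\ge1}$ is the quantitative substance behind that claim and is, in any case, exactly the form of estimate the later applications require.
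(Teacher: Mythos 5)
Your proposal is correct and follows essentially the same route as the paper: the paper's proof (a one-line repetition of the proof of Corollary \ref{corollary:local uniform Hölder continuity of scaled range}) consists precisely in feeding the preceding Lemma's $n$-uniform moment bound into Kolmogorov's continuity criterion with the exponent arithmetic $\chi<1/\beta-(\eta+1)/(2p)$, and your Garsia--Rodemich--Rumsey argument is just a quantitative rendering of that same criterion. The ``delicate point'' you flag --- upgrading the uniform $L^{2p}$ control of the Hölder seminorms $K_n$ to a single a.s.\ finite constant dominating all $n$ simultaneously --- is indeed not delivered by Kolmogorov's theorem applied to each $n$ separately, but the paper glosses over it in exactly the same way, and the $n$-uniform moment bound you do establish is what the tightness claim and the paper's later applications actually require.
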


\textit{Step 2:}
Let $s,t\in[0,T]$, $s<t$ and $\varphi,\psi\in\R$. The following proof is (up to details) the same as that of Theorem $7.1.$ in \cite{LeGallRosenRangeOfStableRandomWalks1991}, but we provide it for sake of completeness. By Skorokhod's extension of Donsker's Theorem (\cite{SkorokhodLimitTheoremsStochProc1956}), by writing $X_t^{(n)}:=b_\beta(n)^{-1}X_{\lfloor nt\rfloor}$, then $X^{(n)}$ converges in distribution to $U^\beta$ in the $J_1$ topology. By Skorokhod's representation Theorem, we may construct for all $n\ge1$ a process $\mathcal X^{(n)}$ distributed as $X^{(n)}$ and $\mathcal U^\beta$ distributed as $U^\beta$ such that $\mathcal X^{(n)}$ converges $a.s.$ to $\mathcal U^\beta$ in $\mathcal D_T(\R)$. For $r\in[0,T]$, $\varepsilon>0$, we define 
$$\mathcal W_r^\varepsilon:=\left\{x\in\R,d\left(x,\mathcal U^\beta(0,r)\right)\le\varepsilon\right\}.$$
By monotone convergence, we have 
$$a.s.\text{-}\underset{\varepsilon\rightarrow0}\lim\lambda\left(\mathcal W^\varepsilon_r\right)=\lambda\left(\overline{\mathcal U^\beta(0,r)}\right)\overset{a.s.}=\lambda\left(\mathcal U^\beta(0,r)\right),$$
where the second equality follows from the fact that $\mathcal U^\beta$ has a countable number of discontinuities. By construction, for all $\omega\in\Omega$ and $\varepsilon>0$, there exists $n_\varepsilon(\omega)\ge1$ such that for all $r\in[0,T]$, $n\ge n_\varepsilon(\omega)$, $\mathcal X_r^{(n)}\in\mathcal W^\varepsilon_r$ and $b_\beta(n)^{-1}<\varepsilon$. By letting $\tilde R_n:=|\mathcal X^{(n)}(0,n)|$ for $n\ge1$ in such a way that $\tilde R_n\overset{\mathrm{(d)}}=R_n$, we get for all sufficiently large $n$ and all $r\in[0,T]$ :
\begin{eqnarray*}
    b_\beta(n)^{-1}\tilde R_{\lfloor nr\rfloor}&=&b_\beta(n)^{-1}\sum_{y\in\Z}\1_{\{\lfloor y\rfloor/b_\beta(\lfloor nr\rfloor)\in\mathcal X^{(\lfloor nr\rfloor)}(0,\lfloor nr\rfloor)\}}\\
    &=&b_\beta^{-1}(n)\int_\R\1_{\{\lfloor y\rfloor/b_\beta(\lfloor nr\rfloor)\in\mathcal X^{(\lfloor nr\rfloor)}(0,\lfloor nr\rfloor)\}}\,\d y\\
    &=&\frac{b_\beta(\lfloor nr\rfloor}{b_\beta(\lfloor n\rfloor)}\int_\R\1_{\{\lfloor y b_\beta(\lfloor nr\rfloor)\rfloor/b_\beta(\lfloor nr\rfloor)\in\mathcal X^{(\lfloor nr\rfloor)}(0,\lfloor nr\rfloor)\}}\,\d y\\
    &\le&\frac{b_\beta(\lfloor nr\rfloor}{b_\beta(\lfloor n\rfloor)}\int_\R\1_{\{y\in\mathcal W_1^{2\varepsilon}\}}\,\d y=\frac{b_\beta(\lfloor nr\rfloor}{b_\beta(\lfloor n\rfloor)}\lambda\left(\mathcal W_1^{2\varepsilon}\right),
\end{eqnarray*}
whence 
\begin{equation*}
    \underset{n\rightarrow\infty}\limsup \text{ }b_\beta(n)^{-1}\tilde R_{\lfloor nr\rfloor}\le r^{1/\beta}\lambda\left(\mathcal U^\beta(0,1)\right).
\end{equation*}
Therefore, we have 
$$\underset{n\rightarrow\infty}\limsup\text{ }b_\beta(n)^{-1}\left(\varphi\tilde R_{\lfloor ns\rfloor}+\psi\tilde R_{\lfloor nt\rfloor}\right)\le(\varphi s^{1/\beta}+\psi t^{1/\beta})\lambda\left(\mathcal U^\beta(0,1)\right)\quad a.s.,$$
which implies 
\begin{equation*}
    \underset{n\rightarrow\infty}\lim\left(b_\beta(n)^{-1}\left(\varphi\tilde R_{\lfloor ns\rfloor}+\psi\tilde R_{\lfloor nt\rfloor}\right)-(\varphi s^{1/\beta}+\psi t^{1/\beta})\lambda\left(\mathcal U^\beta(0,1)\right)\right)_+=0\quad a.s.
\end{equation*}
As $b_\beta(n)^{-1}\tilde R_n$ is bounded in $L^2$ (see \cite{LeGallRosenRangeOfStableRandomWalks1991}, p.$703$), so is $b_\beta(n)^{-1}\left(\varphi\tilde R_{\lfloor ns\rfloor}+\psi\tilde R_{\lfloor nt\rfloor}\right)$ and thus by uniform integrability 
$$\underset{n\rightarrow\infty}\lim\E\left[\left(b_\beta(n)^{-1}\left(\varphi\tilde R_{\lfloor ns\rfloor}+\psi\tilde R_{\lfloor nt\rfloor}\right)-(\varphi s^{1/\beta}+\psi t^{1/\beta})\lambda\left(\mathcal U^\beta(0,1)\right)\right)_+\right]=0.$$
(\ref{convergence:LT for d/beta<1}) and the regularly varying nature of $b_\beta$ yield 
\begin{equation*}
    \underset{n\rightarrow\infty}\lim\E\left[b_\beta(n)^{-1}\left(\varphi\tilde R_{\lfloor ns\rfloor}+\psi\tilde R_{\lfloor nt\rfloor}\right)-(\varphi s^{1/\beta}+\psi t^{1/\beta})\lambda\left(\mathcal U^\beta(0,1)\right)\right]=0,
\end{equation*}
hence 
\begin{equation*}
    \underset{n\rightarrow\infty}\lim\E\left[\left|b_\beta(n)^{-1}\left(\varphi\tilde R_{\lfloor ns\rfloor}+\psi\tilde R_{\lfloor nt\rfloor}\right)-(\varphi s^{1/\beta}+\psi t^{1/\beta})\lambda\left(\mathcal U^\beta(0,1)\right)\right|\right]=0,
\end{equation*}
and therefore 
\begin{eqnarray*}
    \varphi\overline{R}_s^{(n)}+\psi\overline{R}_t^{(n)}&\overset{\mathrm{(d)}}=&b_\beta(n)^{-1}\left(\varphi\tilde R_{\lfloor ns\rfloor}+\psi\tilde R_{\lfloor nt\rfloor}\right)\\
    &\distribution&(\varphi s^{1/\beta}+\psi t^{1/\beta})\lambda\left(\mathcal U^\beta(0,1)\right)\overset{\mathrm{(d)}}=\varphi\lambda(U^\beta(0,s))+\psi\lambda(U^\beta(0,t))
\end{eqnarray*}
which concludes the proof.

\section{Limit Theorems for the energy functional}

\label{section:LT energy}

\subsection{Proof of the CLT}

The aim of this section is to provide a proof of Theorem \ref{theorem:main result 2}. For sake of brevity, since the proofs are almost identical in each case, we shall only treat the case $1\le d/\beta<3/2$. As mentioned previously, we may work with $\mathcal E$ instead of $E$ and the result shall follow. Let $m:\R_+\rightarrow\R_+$ be a continuously differentiable function of regular variation of index $\chi>d/\beta-2$ with monotone derivative. We let $m_n(x):=m(nx)/m(n)$ for $x\ge0$, $n\ge1$. Then for $t>0$,
\begin{align}
    \frac{h(n)^2b_\beta(n)^d}{m(n)n^2}\{\mathcal E_{nt}\}&=\int_0^{nt}\frac{m(nt-s)}{m(n)}\,\d\overline{\mathcal R}_{s/n}^{(n)}\nonumber\\
    &=\int_0^tm_n(t-s)\,\d\overline{\mathcal R}_s^{(n)}\nonumber\\
    &=-\int_0^tm_n(s)\,\d\overline{\mathcal R}_{t-s}^{(n)}\nonumber\\
    &=-\int_0^tm_n(s)\,\d\left(\overline{\mathcal R}_{t}^{(n)}-\overline{\mathcal R}_{t-s}^{(n)}\right)\nonumber\\
    &=m_n(t)\overline{\mathcal R}_t^{(n)}-\int_0^t\left(\overline{\mathcal R}_{t}^{(n)}-\overline{\mathcal R}_{t-s}^{(n)}\right)\partial_s m_n(s)\,\d s,
\end{align}
where we succesively used a change of variable and integration by parts for Stieltjes integrals. If $(Y_t)_{t\ge0}$ is a stochastic process with continuous sample paths, define for all $0\le s\le t$ the process
\begin{equation*}
    Y_{s,t}:=Y_t-Y_{t-s}.
\end{equation*}
We now invoke Skorokhod's representation Theorem to construct a process $\gamma'^\beta$ distributed as $\gamma^\beta$ and for each $n\ge1$ a process $\mathcal R'$ distributed as $\mathcal R$ each on some probability space such that the following $a.s.$ convergence holds :
\begin{equation}
\label{convergence:a.s. convergence of sup norm of range + SILT to 0}
    \underset{s\in[0,t]}\sup\left|\overline{\mathcal{R}'}_{s,t}^{(n)}+\gamma'^\beta_{s,t}\right|\overset{a.s.}{\underset{n\rightarrow\infty}\longrightarrow}0.
\end{equation}
This is possible thanks to Theorem \ref{theorem:main result}. By the assumptions we made on $m$, we have for all $s\in(0,t]$,
\begin{equation*}
    \underset{n\rightarrow\infty}\lim\partial_sm_n(s)=\chi s^{\chi-1}
\end{equation*}
(see \cite{BinghamGoldieTeugelsRegularVariation1987}, p.$39$). Firstly, \begin{equation}
\label{convergence:a.s. convergence of the first term}
    \left|m_n(t)\overline{\mathcal R'}_t^{(n)}+t^\chi\overline\gamma_t^\beta\right|\underset{n\rightarrow\infty}{\overset{a.s.}\longrightarrow}0.
\end{equation}
Secondly, by using the inequality $|xx'-yy'|\le|x(x'-y')|+|y'(x-y)|$, we get
\begin{align}
    \left|\int_0^t\overline{\mathcal R'}_{s,t}^{(n)}\partial_sm_n(s)\,\d s+\chi\int_0^t\gamma_{s,t}'^\beta s^{\chi-1}\,\d s\right|&\le\int_0^t\left|\overline{\mathcal R'}_{s,t}^{(n)}\right|\left|\partial_sm_n(s)-\chi s^{\chi-1}\right|\,\d s\nonumber\\
    &\quad\quad+\int_0^t\chi s^{\chi-1}\left|\overline{\mathcal R'}_{s,t}^{(n)}+\gamma_{s,t}'^\beta\right|\,\d s\label{ineq:bound for convergence of integral in CLT of energy}.
\end{align}
Fix $\eta\in(0\vee(-\chi),2-d/\beta)$. Since the uniform in $n$ local $\eta$-Hölder continuity of $\overline{\mathcal R}^{(n)}$ and $\gamma^\beta$ transfers over to $\overline{\mathcal R'}^{(n)}$ and $\gamma'^\beta$, then as in the beginning of the proof, the two integrals appearing in the right-hand side of (\ref{ineq:bound for convergence of integral in CLT of energy}) are well defined, and an  application of the dominated convergence Theorem and (\ref{convergence:a.s. convergence of sup norm of range + SILT to 0}) shows that the second one converges to $0$ $a.s.$ as $n\rightarrow\infty$. Furthermore, since $m'$ is regularly varying of index $\chi-1$, then by Potter's bounds, for all $\varepsilon>0$, there is some $C_\varepsilon>0$ such that for all $s\in(0,t]$,
\begin{equation}
\label{ineq:bound for m_n'}
    |\partial_sm_n(s)|=\left|\frac{nm'(n)}{m(n)}\right|\left|\frac{m'(ns)}{m'(n)}\right|\le C_\varepsilon\left(s^{\chi+\varepsilon-1}+s^{\chi-\varepsilon-1}\right),
\end{equation}
since $nm'(n)/m(n)$ converges to $\chi$ as $n\rightarrow\infty$. Therefore, by taking $\varepsilon\in(0,\eta+\chi)$,  we have 
\begin{equation*}
    \left|\overline{\mathcal R'}_{s,t}^{(n)}\right|\left|\partial_sm_n(s)-\chi s^{\chi-1}\right|\le C(t,\eta)\left(C_\varepsilon\right(s^{\eta+\chi+\varepsilon-1}+s^{\eta+\chi-\varepsilon-1})+\chi s^{\eta+\chi-1}),
\end{equation*}
which is integrable on $[0,t]$. Whence by the dominated convergence Theorem, we have 
\begin{equation}
\label{convergence:a.s. convergence of the integral}
    \left|\int_0^t\overline{\mathcal R'}_{s,t}^{(n)}\partial_sm_n(s)\,\d s+\chi\int_0^t\overline\gamma_{s,t}^\beta s^{\chi-1}\,\d s\right|\overset{a.s.}{\underset{n\rightarrow\infty}\longrightarrow}0.
\end{equation}
By defining $\mathcal E'$ exactly as $\mathcal E$ but by replacing $\mathcal R$ with $\mathcal R'$, we have thus shown that 
\begin{equation}
    \left|\frac{h(n)^2b_\beta(n)^d}{m(n)n^2}\left\{\mathcal E_{nt}'\right\}+t^\chi\gamma_t'^\beta-\chi\int_0^t\left(\gamma_t'^\beta-\gamma_{t-s}'^\beta\right)s^{\chi-1}\,\d s\right|\overset{a.s.}{\underset{n\rightarrow\infty}\longrightarrow}0.
\end{equation}
Therefore, we deduce that 
\begin{equation*}
    \frac{h(n)b_\beta(n)^d}{m(n)n^2}\{\mathcal E_{nt}\}\distribution -t^\chi\gamma_t^\beta+\chi\int_0^t\left(\gamma_t^\beta-\gamma_{t-s}^\beta \right)s^{\chi-1}\,\d s.
\end{equation*}
Finally, the fact that 
\begin{equation*}
    -t^\chi\gamma_t^\beta+\chi\int_0^t\left(\gamma_t^\beta-\gamma_{t-s}^\beta\right)s^{\chi-1}\,\d s=-\int_0^t(t-s)^\chi\,\d\gamma_s^\beta
\end{equation*}
in the sense of Young is simply (\ref{eq:definition of singular Young integral}) for $g=\gamma^\beta$.

\subsection{Extension to a Functional CLT}

\label{section:FLT energy}

As announced in the Introduction, we now extend Theorem \ref{theorem:main result 2} to its functional analog

\begin{theorem}
Under the assumptions of Theorem \ref{theorem:main result 2}, the following convergences in distribution hold on $\mathcal C(\R_+)$ endowed with the topology of uniform convergence on compacts
    \begin{itemize}
        \item If $d/\beta\ge3/2$ and $\chi>-1/2$,
        \begin{equation}
            \left(\left(m(n)\sqrt{ng(n)}\right)^{-1}(\mathcal E_{nt}-\E[\mathcal E_{nt}])\right)_{t\ge0}\distribution\left(\sigma\int_0^t(t-s)^\chi\,\d W_{s}\right)_{t\ge0},
        \end{equation}
        \item If $1\le d/\beta<3/2$ and $\chi>\beta/d-2$,
        \begin{equation}
            \left(\frac{h(n)^2b_\beta(n)^d}{m(n)n^2}\left(\mathcal E_{nt}-\E[\mathcal E_{nt}]\right)\right)_{t\ge0}\distribution \left(-\int_0^t(t-s)^\chi\,\d\gamma_s^\beta\right)_{t\ge0},
        \end{equation}
        \item If $d/\beta<1$ and $\chi>-1/\beta$,
        \begin{equation}
            \left((m(n)b_\beta(n))^{-1}\mathcal E_{nt}\right)\distribution\left(\int_0^t(t-s)^\chi\,\d L(s)\right)_{t\ge0},
        \end{equation}
    \end{itemize}
\end{theorem}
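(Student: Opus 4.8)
The plan is to exhibit the rescaled energy as the image of the rescaled range under a single deterministic operator and then to invoke the continuous mapping theorem; as in the scalar proof we would detail only the regime $1\le d/\beta<3/2$, the other two being identical up to the scaling factor $S_{d,\beta}(n)$ and the identity of the limit. Writing $K_n(u):=m_n(u)$ and $K(u):=u^\chi$ for the prelimit and limiting kernels, and setting $f_{s,t}:=f(t)-f(t-s)$ for a continuous path $f$, the integration-by-parts identity of Section \ref{section:LT energy} says that for every $n$ and $t\ge0$
\[
\frac{h(n)^2b_\beta(n)^d}{m(n)n^2}\{\mathcal E_{nt}\}=\Phi_n(\overline{\mathcal R}^{(n)})(t),\qquad \Phi_n(f)(t):=K_n(t)f(t)-\int_0^t f_{s,t}\,\partial_s K_n(s)\,\d s,
\]
and that the candidate limit is $\Phi(-\gamma^\beta)$, where $\Phi$ is obtained by replacing $K_n$ with $K$. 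Thus it suffices to prove that $\Phi_n(\overline{\mathcal R}^{(n)})\distribution\Phi(-\gamma^\beta)$ in $\mathcal C(\R_+)$, and to identify $\Phi(-\gamma^\beta)(t)$ with the singular Young integral $-\int_0^t(t-s)^\chi\,\d\gamma_s^\beta$ of Appendix \ref{appendix:Young integral}.

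First I would upgrade the convergence of Theorem \ref{theorem:main result} to a local Hölder topology. The uniform-in-$n$ moment bounds behind Corollary \ref{corollary:local uniform Hölder continuity of scaled range} assert exactly that, for each $T>0$ and each $\chi''<2-d/\beta$, the laws of $\overline{\mathcal R}^{(n)}$ on $[0,T]$ are tight in $\mathcal C^{\chi''}([0,T])$; since the embedding $\mathcal C^{\chi''}([0,T])\hookrightarrow\mathcal C^{\chi'}([0,T])$ is compact whenever $\chi'<\chi''$, this tightness together with the finite-dimensional limits already identified in Theorem \ref{theorem:main result} promotes the convergence from $\mathcal C(\R_+)$ to convergence in distribution in the local $\chi'$-Hölder topology, for any $\chi'<2-d/\beta$. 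The limit $\gamma^\beta$ is itself locally $\chi'$-Hölder by the regularity theorem established above, so both sides live in the same space.

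Next I would prove that $\Phi$ is continuous from this Hölder space into $\mathcal C(\R_+)$. Choosing $\chi'$ close enough to $2-d/\beta$ that $\chi'+\chi>0$ — which is precisely what the admissibility range of $\chi$ in the statement guarantees (and which for the cases $d/\beta\ge3/2$ and $d/\beta<1$ reads $\chi>-1/2$ and $\chi>-1/\beta$ respectively) — the estimate $|f_{s,t}-g_{s,t}|\le\|f-g\|_{\chi',T}\,s^{\chi'}$ renders the kernel $\partial_s K(s)=\chi s^{\chi-1}$ absolutely integrable against increments near $s=0$, and a direct bound yields
\[
\sup_{t\in[0,T]}\bigl|\Phi(f)(t)-\Phi(g)(t)\bigr|\le C(T,\chi',\chi)\,\|f-g\|_{\chi',T}.
\]
Hence $\Phi$ is Lipschitz on bounded subsets of $\mathcal C^{\chi'}([0,T])$; this is the quantitative counterpart of the well-posedness of the singular Young integral, the singular regime $\chi<0$ being exactly where the constraint $\chi'+\chi>0$ is forced.

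Finally I would absorb the discrepancy between $K_n=m_n$ and $K=(\cdot)^\chi$. Potter's bounds for the regularly varying $m'$ (inequality \ref{ineq:bound for m_n'}) dominate $\partial_s m_n(s)$ by an $n$-independent integrable function of $s$, while $\partial_s m_n(s)\to\chi s^{\chi-1}$ pointwise and $m_n(t)\to t^\chi$; combined with the uniform Hölder control of $\overline{\mathcal R}^{(n)}$, dominated convergence gives $\sup_{t\in[0,T]}|\Phi_n(\overline{\mathcal R}^{(n)})(t)-\Phi(\overline{\mathcal R}^{(n)})(t)|\to0$ in probability (indeed $a.s.$ on a Skorokhod coupling, as in the scalar proof). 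The continuous mapping theorem applied to $\Phi$ along the Hölder-topology convergence $\overline{\mathcal R}^{(n)}\distribution-\gamma^\beta$ then gives $\Phi(\overline{\mathcal R}^{(n)})\distribution\Phi(-\gamma^\beta)$ in $\mathcal C(\R_+)$, and Slutsky's theorem absorbs the vanishing error, yielding the announced convergence; the cases $d/\beta\ge3/2$ and $d/\beta<1$ follow verbatim with $W$, resp. $L$, in place of $-\gamma^\beta$. The principal obstacle is the uniform-in-$t$ control of the singular operator $\Phi$ together with the kernel replacement $m_n\rightsquigarrow(\cdot)^\chi$: once $\Phi$ is shown continuous on the correct Hölder space and the kernel error is shown to vanish uniformly on compacts, all three functional limits drop out at once.
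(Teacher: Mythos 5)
Your proof is correct, but it takes a genuinely different route from the paper's. The paper never upgrades Theorem \ref{theorem:main result} to a Hölder topology: it obtains the finite-dimensional marginals of the rescaled energy by re-running the scalar proof (Skorokhod coupling of $\overline{\mathcal R}^{(n)}$ with the limit, then dominated convergence through the integration-by-parts representation), and it proves tightness of $\overline{\mathcal E}^{(n)}$ \emph{directly}, by establishing the pathwise bound $\left|\overline{\mathcal E}_t^{(n)}-\overline{\mathcal E}_s^{(n)}\right|\le\tilde C(T,\eta)|t-s|^{2-d/\beta-\eta}$ with $\tilde C(T,\eta)$ an a.s.\ finite random variable uniform in $n$ (combining Corollary \ref{corollary:local uniform Hölder continuity of scaled range}, the bound (\ref{ineq:bound for m_n'}) and the mean value theorem), then concluding via Ascoli and a Markov-type estimate on the Hölder norm. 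You instead prove a stronger intermediate statement — convergence of $\overline{\mathcal R}^{(n)}$ in the local $\chi'$-Hölder topology — plus a functional-analytic lemma: the singular kernel operator $\Phi$ is a bounded linear map on $\chi'$-Hölder paths vanishing at $0$ whenever $\chi+\chi'>0$, and then you invoke the continuous mapping theorem and Slutsky. Both routes consume exactly the same estimates (the uniform Hölder bounds of Corollary \ref{corollary:local uniform Hölder continuity of scaled range}, the Potter bounds for $\partial_s m_n$, and the identity (\ref{eq:definition of singular Young integral})). What your route buys: modularity — tightness of the energy comes for free from continuity of $\Phi$, all three regimes are dispatched by one lemma, and the Hölder-topology convergence of the range is of independent interest. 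What the paper's route buys: it stays entirely within weak convergence on $\mathcal C(\R_+)$, avoiding the non-separability and measurability subtleties of weak convergence in Hölder spaces, which your step ``tightness plus fidis implies convergence in $\mathcal C^{\chi'}$'' must address (e.g.\ by working in the separable little-Hölder space). Two small points to patch: (i) your Lipschitz estimate for $\Phi$ needs $f(0)=g(0)=0$ (or a sup-norm term) to control the boundary term $t^\chi\left(f(t)-g(t)\right)$ near $t=0$ when $\chi<0$ — harmless here since all the processes involved vanish at $0$; (ii) the condition actually needed to make $\chi+\chi'>0$ achievable in the regime $1\le d/\beta<3/2$ is $\chi>d/\beta-2$, which is what the paper's scalar proof assumes, rather than the $\chi>\beta/d-2$ appearing in the statement; your reading is the one consistent with the proofs.
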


\begin{proof}
    As previously, we only focus on the case $1\le d/\beta<3/2$, since the other cases are treated identically. The convergence of finite dimensional marginals is obtained exactly as in the proof of the CLT, we leave the details to the reader. To show tightness, we introduce for $t\ge0$, $n\ge1$,
    \begin{equation*}
        \overline{\mathcal E}_t^{(n)}:=\frac{h(n)^2b_\beta(n)^d}{m(n)n^2}\{\mathcal E_{nt}\}.
    \end{equation*}
    We fix $T>0$ and let $0\le s<t\le T$ and take $\eta>0$ sufficiently small. Recalling that
    \begin{equation*}
        \overline{\mathcal E}_t^{(n)}=m_n(t)\overline{\mathcal R}_t^{(n)}-\int_0^t\overline{\mathcal R}_{s,t}^{(n)}\partial_sm_n(s)\,\d s,
    \end{equation*}
    we get 
    \begin{align*}
        \left|\overline{\mathcal E}_t^{(n)}-\overline{\mathcal E}_s^{(n}\right|&\le\left|m_n(t)\overline{\mathcal R}_t^{(n)}-m_n(s)\overline{\mathcal R}_s^{(n)}\right|+\left|\int_s^t\overline{\mathcal R}_{u,t}^{(n)}\partial_um_n(u)\,\d u\right|\\
        &\quad + \left|\int_0^s\left(\overline{\mathcal R}_{u,t}^{(n)}-\overline{\mathcal R}_{u,s}^{(n)}\right)\partial_um_n(u)\,\d u\right|\\
        &\le \left|m_n(t)-m_n(s)\right|\left|\overline{\mathcal R}_t^{(n)}\right|+|m_n(s)|\left|\overline{\mathcal R}_{t-s,t}^{(n)}\right|+\left|\int_s^t\overline{\mathcal R}_{u,t}^{(n)}\partial_um_n(u)\,\d u\right|\\
        &\quad + \left|\int_0^s\left(\overline{\mathcal R}_{t-s,t}^{(n)}-\overline{\mathcal R}_{t-s,t-u}^{(n)}\right)\partial_um_n(u)\,\d u\right|
    \end{align*}
    By recalling (\ref{ineq:bound for m_n'}), Corollary \ref{corollary:local uniform Hölder continuity of scaled range}, the fact that $m_n(s)$ converges to $s^\chi$ as $n\rightarrow\infty$ and the mean value Theorem, we may find some $a.s.$ finite random variable $C(T,\eta)$ depending only on $T$ and $\eta$ such that 
    \begin{align*}
        \left|\overline{\mathcal E}_t^{(n)}-\overline{\mathcal E}_s^{(n}\right|&\le C(T,\eta)\left(|t-s|+|t-s|^{2-d/\beta-\eta}+|t-s|+2|t-s|^{2-d/\beta-\eta}\right)\\
        &\le C(T,\eta)\left(2T^{d/\beta+\eta}+3\right)|t-s|^{2-d/\beta-\eta}=:\tilde C(T,\eta)|t-s|^{2-d/\beta-\eta}.
    \end{align*}
    We let $\mathcal C_{L,T}^\alpha\subseteq \mathcal C([0,T])$ denote the $\alpha$-Hölder functions of $\mathcal C([0,T])$ with $\alpha$-Hölder norm bounded by $L$. Each $\mathcal C_{L,T}^\alpha$ is relatively compact in $\mathcal C([0,T])$ by Ascoli's theorem and by letting $\varepsilon>0$, we have for $L>0$, $n\ge1$,
    \begin{equation*}
        \p\left(\overline{\mathcal E}_{\mid[0,T]}^{(n)}\notin\mathcal C_{L,T}^{2-d/\beta-\eta}\right)=\p\left(\tilde C(T,\eta)>L\right),
    \end{equation*}
    which is made smaller than $\varepsilon$ by letting $L$ become sufficiently large. We have thus shown tightness of the $\overline{\mathcal E}_{\mid[0,T]}^{(n)}$ for any $T>0$, which concludes the proof.
\end{proof}

\appendix

\section{Young Integral}
\label{appendix:Young integral}

The aim of the Young Integral (\cite{Young1936}, \cite{CASTREQUINI2022112064}) is to extend the Stieljtes integral $\int f\,\d g$ (which requires $f$ to be continuous and $g$ to be of bounded variation) to a case where $g$ is supposed to be more irregular, which naturally imposes some stronger regularity on $f$. Precisely, if $f$ and $g$ are respectively $\alpha$-Hölder and $\beta$-Hölder on some compact interval $[a,b]$ where $\alpha+\beta>1$ (which we suppose to be the case in the rest of the section), then the Young integral of $f$ against $\,\d g$ is defined as the limit of the Riemann sums
\begin{equation*}
    \int_a^bf(t)\,\d g(t)=\underset{\Delta(\pi)\rightarrow0}\lim\underset{\pi}\sup\sum_{i=1}^{|\pi|}f(t_i)\left(g(t_i)-g(t_{i-1})\right),
\end{equation*}
where the supremum is taken over all subdivisions $\pi=\{a=t_0<t_1<\dots<t_{|\pi|}=b\}$ of $[a,b]$ and where $\Delta(\pi)$ is the mesh of $\pi$. The mapping $(f,g)\mapsto\int f\,\d g$ is bilinear and the classical integration by parts formula holds :
\begin{equation*}
    \int_a^bf(t)\,\d g(t)=f(a)g(a)-f(b)g(b)-\int_a^bg(t)\,\d f(t).
\end{equation*}
Furthermore, if we define $g_b(t):=g(b-t)$ for all $t\in[a,b]$, then $g_b$ is $\beta$-Hölder and by noticing that for any subdivision $\pi=\{t_i\}$, the set $\tilde\pi:=\{b-t_i\}$ is still a subdivision of $[a,b]$ and that the mapping $\pi\mapsto\tilde\pi$ is bijective, one easily checks by examining the definition that we have the following time inversion formula :
\begin{equation*}
    \int_a^bf(t)\,\d g(t)=-\int_a^bf(b-t)\,\d g_b(t).
\end{equation*}
We now take $f$ of the form $f(s):=(t-s)^\chi$ for some $t>0$ and $\chi>-\beta$. If $\chi\ge0$, then $f$ is Lipschitz on $[0,t]$ and so the Young integral $\int_0^t(t-s)^\chi\,\d g(s)$ is well defined. If $\chi<0$, then $f$ is only Lipschitz on $[0,t-\varepsilon]$ for any $\varepsilon\in(0,t)$, and so the Young integral $\int_0^{t-\varepsilon}(t-s)^\chi\,\d g(s)$ is well defined. Furthermore, by time inversion, bilinearity and the integration by parts formula, an immediate calculation yields
\begin{equation}
\label{eq:manipulation for singular integral with cutoff}
    \int_0^{t-\varepsilon}(t-s)^\chi\,\d g(s)=t^\chi g(t)-\varepsilon^\chi(g(t)-g(t-\varepsilon))-\int_\varepsilon^t(g(t)-g(t-s))\,\d\left(s^\chi\right).
\end{equation}
Firslty, the $\beta$-Hölder regularity of $g$ yields for some $C>0$
\begin{equation}
\label{ineq:bound for normal term in cutoff integral}
|\varepsilon^\chi(g(t)-g(t-\varepsilon))|\le C\varepsilon^{\chi+\beta},
\end{equation}
which tends to $0$ as $\varepsilon\rightarrow0$, since $\beta+\chi>0$. Furthermore, the monotone convergence Theorem yields
\begin{equation}
    \label{eq:integral term of singular integral with cutoff}
    \underset{\varepsilon\rightarrow0}\lim\int_\varepsilon^t(g(t)-g(t-s))\,\d\left( s^\chi\right)=\chi\int_0^t(g(t)-g(t-s))s^{\chi-1}\,\d s,
\end{equation}
where the right-hand side is again well defined thanks to the $\beta$-Hölder regularity of $g$. Combining (\ref{ineq:bound for normal term in cutoff integral}) and (\ref{eq:integral term of singular integral with cutoff}), we see that the left-hand side of (\ref{eq:manipulation for singular integral with cutoff}) converges as $\varepsilon\rightarrow0$ and we define 
\begin{equation}
\label{eq:definition of singular Young integral}
    \int_0^t(t-s)^\chi\,\d g(s):=\underset{\varepsilon\rightarrow0}\lim\int_0^{t-\varepsilon}(t-s)^\chi\,\d g(s)=t^\chi g(t)-\chi\int_0^t(g(t)-g(t-s))s^{\chi-1}\,\d s
\end{equation}

\paragraph{Acknowledgement.} This work was funded by the CNRS project MITI. The author wishes to warmly thank his advisors Vincent Bansaye, Jean-René Chazottes and Sylvain Billiard for the time spent discussing and proofreading this work.

\printbibliography[title={Bibliography}]

\end{document}